\documentclass[12pt,a4paper]{article}
\usepackage[T1]{fontenc}
\usepackage[utf8]{inputenc}
\usepackage[british]{babel}
\usepackage{amsmath,amssymb,amsthm}
\usepackage{tensor}
\usepackage{xcolor}
\usepackage[all]{xy}
\usepackage{cite}
\usepackage{hyperref}

\newcommand{\Z}{\mathbb{Z}}

\DeclareMathOperator{\Syl}{Syl}

\DeclareMathOperator{\Soc}{Soc}
\DeclareMathOperator{\Ker}{Ker}
\DeclareMathOperator{\Aut}{Aut}

\DeclareMathOperator{\Fix}{Fix}

\DeclareMathOperator{\Core}{Core}
\DeclareMathOperator{\id}{id}
\DeclareMathOperator{\SLI}{SLI}

\DeclareMathOperator{\Cent}{C}
\DeclareMathOperator{\Norm}{N}
\DeclareMathOperator{\Ideal}{I}

\newtheorem{teo}{Theorem}[section]
\newtheorem{prop}[teo]{Proposition}

\theoremstyle{definition}
\newtheorem{defi}[teo]{Definition}

\theoremstyle{remark}
\newtheorem{remark}[teo]{Remark}

\theoremstyle{definition}
\newtheorem{ex}[teo]{Example}

\newtheorem{lemma}[teo]{Lemma}
\newtheorem{cor}[teo]{Corollary}

\newtheorem{athm}{Theorem}

\title{Carter-like theorems for skew braces}
\author{Sergio Camp-Mora
\thanks{Departamento de Matemática Aplicada, Universitat Polit\`ecnica de Val\`encia, Camino de Vera, s/n, 46022, Val\`encia, Spain; \texttt{scamp@mat.upv.es}.} \and J. P\'erez-Alemany
\thanks{Departament de Matem\`atiques, Universitat de Val\`encia, Av. Vicent Andrés Estellés, 19, 46100 Burjassot, Val\`encia, Spain; \texttt{Jopea5@alumni.uv.es}, \texttt{Pedro.A.Perez@uv.es}; 
} \and P. P\'erez-Altarriba\addtocounter{footnote}{-1}\footnotemark}
\date{}
\begin{document}
\maketitle

\begin{abstract}
Analogues of the Sylow, Hall, and Schur–Zassenhaus theorems for finite groups have recently been generalised to finite skew braces, providing a powerful framework for their structural analysis. Building on this foundation, the natural next step of skew braces is the study of Carter sub-skew braces, namely, centrally nilpotent, self-idealising sub-skew braces. However, while every finite soluble group possesses Carter subgroups, not every finite soluble skew brace possesses Carter sub-skew braces. We introduce Carter sub-skew braces, analyse their structural limitations and establish Carter-like theorems. We prove that a finite soluble skew brace has Carter subbraces if every sub-skew brace of prime-power order is a centrally nilpotent left ideal. We also prove their existence within  finite multipermutational skew braces.

\emph{Keywords: skew left brace, Carter subbraces}

    \emph{Mathematics Subject Classification (2020):
    16T25, 
    20D25, 
    20D10,
    20F18
  }
\end{abstract}

\section{Introduction}

Skew left braces are algebraic structures introduced in \cite{GuarnieriVendramin17} as a generalisation of the left braces defined in \cite{Rump07}. They have received considerable attention in recent years, particularly, due to their role in the study of non-degenerate set-theoretic solutions of the Yang-Baxter equation. A \emph{skew (left) brace} is a set $B$ endowed with two group operations, $(B,+)$ and $(B,\cdot)$, satisfying the distributivity-like relation
\[a(b+c)=ab-a+ac \qquad \text{for all } a,b,c\in B.\]
Equivalently, the \emph{multiplicative group} $(B,\cdot)$ acts via automorphisms on the \emph{additive group} $(B,+)$ through the \emph{lambda map}
\[\lambda\colon(B,\cdot)\longrightarrow \Aut(B,+),\qquad \lambda_a(b)=-a+ab.\]
When this action is trivial, $B$ is called a \emph{trivial skew brace}, and the two group operations coincide. Therefore, trivial skew braces can be regarded as groups, showing that skew braces provide a natural generalisation of groups. This makes it natural to investigate group-theoretic notions and properties in the setting of skew braces.

In classical finite group theory, the structural analysis of soluble groups, which is based on the foundational theorems of Sylow, Hall, Schur–Zassenhaus, and Carter, provides a fundamental framework for understanding groups \cite{DoerkHawkes92}. Sylow and Hall theorems establish the existence, conjugacy and dominance of subgroups of prime-power and coprime orders in finite groups, while Carter theorem establishes the existence and conjugacy of nilpotent self-normalising subgroups in finite soluble groups. These results have played a fundamental role in the development of the theory of finite soluble groups and naturally motivate the development of an analogous theory for skew braces.

A \emph{sub-skew brace} of $B$ is a subgroup of both $(B,+)$ and $(B,\cdot)$. For simplicity, we will simply refer to a sub-skew brace as a \emph{subbrace}. A subbrace $A$ is called a \emph{left ideal} if it is $\lambda$-invariant, a \emph{strong left ideal} if, in addition, $A$ is normal in $(B,+)$, and an \emph{ideal} if it is also normal in $(B,\cdot)$. Ideals are particularly important because they allow us to form quotients of skew braces. 

An important part of the theory of skew braces has developed alongside with the corresponding theory of groups. Building on this line of research given a prime $p$, a \emph{Sylow $p$-subbrace} of $B$ is a Sylow $p$-subgroup of $B$ that is also a subbrace. Similarly, given a set of primes $\pi$, a \emph{Hall $\pi$-subbrace} of $B$ is a Hall $\pi$-subgroup of $B$ that is also a subbrace. In \cite{caranti2026sylowtheoremskewbraces}, Caranti, Del Corso, Di Matteo, Ferrara, and Trombetti proved the first Sylow theorem and various Hall-type theorems for specific classes of skew braces. Subsequently, Truman \cite{truman2026analoguessylowstheoremcauchys} removed the class restrictions and proved unconditional analogues of the first Sylow theorem and Cauchy theorem for all finite skew braces, as well as a Hall-type theorem for skew braces with soluble additive and multiplicative groups. Building upon this work, in \cite{ferrara2026schurzassenhaustheoremsylowstheorem}, Ferrara and Trombetti established a Schur-Zassenhaus theorem for finite skew braces, which guarantees the existence of complements for ideals under coprime conditions (independently proved by Damele in \cite{damele2026schurzassenhaustheoremfiniteskew}). They also proved a Sylow third theorem and recovered the classical containment properties by showing that left ideals are always contained in the corresponding Sylow, Hall, or complement subbraces. Finally, in \cite{ballesterbolinches2026finitetrifactorisedgroupssylow, ballesterbolinches2026finitetrifactorisedgroupssylowdominance}  Ballester-Bolinches, P\'erez-Altarriba, and P\'erez-Calabuig showed that the unconditional results by Truman and the dominance proven by Trombetti and Ferrara, arise naturally from the Sylow and Hall structure of the finite trifactorised groups associated with the skew braces.

With the Sylow, Schur-Zassenhaus, and Hall analogues established, the natural next step is the study of Carter subbraces. A \emph{Carter subbrace} of $B$ is a centrally nilpotent subbrace that is self-idealising, meaning that it is not an ideal of any strictly larger subbrace of $B$. The first remarkable difference  when extending this theory from groups to skew braces is that while every finite soluble group possesses Carter subgroups, not every finite soluble skew brace possesses Carter subbraces. As we show in Example~\ref{ex-non-exis-carter-subbr}, the classical existence theorem fails even for finite soluble skew braces of nilpotent type. The goal of this paper is to develop the theory of Carter subbraces for finite soluble skew braces, understand their structural limitations, and establish hypotheses required to guarantee their existence.

Another difficulty in contrast with the group case is that, in contrast with the group case, neither idealisers nor centralisers of an arbitrary subbrace need to exist in a skew brace; see \cite[Example~A]{BallesterEstebanFerraraPerezCTrombetti25-pacificjm-cent-nilpv}. \cite[Theorem~A]{CampPerezPerez2026}, guarantees the existence of both the idealiser and the centraliser if we restrict our attention to subbraces contained in the fixed-point set $\Fix(B)$ of the lambda action. In Section~\ref{sec-cent-ideal-fix} we show that in this case the centraliser behaves exactly as it does in group theory: if $A$ is a subbrace contained in $\Fix(B)$, then $\Cent_B(A)$ is an ideal of $\Ideal_B(A)$ (Theorem~\ref{teo-cent-ideal-of-idealiser}). This gives an affirmative answer to a question left open in \cite{CampPerezPerez2026}. Namely, if $I$ is an ideal of $B$ contained in $\Fix(B)$, then the centraliser defined in \cite{CampPerezPerez2026} coincides with the ideal centraliser defined in \cite{BournFacchiniPompili23} (Corollary~\ref{cor-cent-eq-idealcent}).

In Section~\ref{sec-basic-prop}, we show that Carter subbraces share some of the classical properties of Carter subgroups; for instance, they are maximal among centrally nilpotent subbraces (Proposition~\ref{prop-carter-max-cn}), but they fail to behave well under quotients (Example~\ref{ex-carter-not-inv-quo}). 

As previously mentioned Carter subbraces may fail to exist for finite soluble skew braces. Therefore, their existence requires additional hypotheses.

\begin{athm}\label{teo-soluble-Carter}
Let $B$ be a finite soluble skew brace such that every subbrace of  prime-power order is a centrally nilpotent left ideal. Then $B$ has Carter subbraces.
\end{athm}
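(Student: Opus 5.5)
Under the hypothesis of Theorem~\ref{teo-soluble-Carter}, the idea is to mimic the classical proof of Carter's theorem: induct on $|B|$ and use a minimal ideal together with the idealiser machinery available for subbraces contained in $\Fix(B)$.

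The first observation is that the hypothesis is strong. Every Sylow $p$-subbrace $P$ of $B$ is a centrally nilpotent left ideal. By the dominance results of Ferrara and Trombetti, every left ideal of $p$-power order lies in every Sylow $p$-subbrace. Hence $P$ contains all left ideals of $p$-power order, and in particular all minimal ideals of $B$ of $p$-power order. Since $B$ is soluble, a minimal ideal $N$ of $B$ is elementary abelian of order $p^n$ in both group structures. Because $N$ is a centrally nilpotent left ideal, we will try to show that $N\subseteq \Fix(B)$, or at least that we may choose $N$ with this property. This is where the hypothesis that every prime-power subbrace is a left ideal should be used, combined with the fact that $B/\Fix$-type quotients act on $N$. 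This step, producing a minimal ideal inside $\Fix(B)$, is the one I expect to be the main obstacle. If it cannot be done directly, I would instead work with the socle-like ideal $N\cap \Fix(B)$, or with $\Fix(B)\cap P$ for a Sylow subbrace $P$, and use the fact that a $\lambda$-action of a $p$-group on a $p$-group has nontrivial fixed points.

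Assuming such an $N\subseteq\Fix(B)$, the quotient $B/N$ again satisfies the hypothesis: subbraces of prime-power order of $B/N$ lift to subbraces containing $N$, and these should be handled via Sylow subbraces of their preimages. By induction, $B/N$ has a Carter subbrace $K/N$. There are two cases.

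\emph{Case $K<B$.} By induction, $K$ has a Carter subbrace $C$, and $CN=K$ by the standard Frattini-type argument using conjugacy of Sylow subbraces inside $K$. We then check that $C$ is self-idealising in $B$. By Theorem~A of \cite{CampPerezPerez2026}, the idealiser $\Ideal_B(C)$ exists. Any $x$ idealising $C$ idealises $CN/N=K/N$, so $x\in K$, and therefore $x\in\Ideal_K(C)=C$.

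\emph{Case $K=B$.} Here $B/N$ is centrally nilpotent. Take a prime $p$ dividing $|N|$, and let $Q$ be a Hall $p'$-subbrace of $B$, which exists by Truman's Hall theorem. Since the Sylow subbraces are centrally nilpotent ideals (left ideals which are normal because $B/N$ is nilpotent and $N\le P$), we will show that $C=\Ideal_B$-type complements such as $\Cent_Q(N)\,P$ give a centrally nilpotent self-idealising subbrace. Theorem~\ref{teo-cent-ideal-of-idealiser} is used here to ensure that $\Cent_B(N)$ is an ideal and to control the idealiser. Finally, Proposition~\ref{prop-carter-max-cn} is used to confirm maximality, and hence self-idealisation.
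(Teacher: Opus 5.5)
Your proposal has a genuine gap. The step you single out as the main obstacle is not merely hard: it is false under the hypotheses of the theorem, and nothing else in your plan replaces it. Take $(B,+)=\Z_6$ with $a\cdot b=a+(-1)^a b$, so that $\lambda_a=(-1)^a\id$ and $(B,\cdot)\cong S_3$.

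\begin{itemize}
\item The only nontrivial proper subbraces are $\{0,3\}$ and $\{0,2,4\}$. Both are characteristic in $(B,+)$ and carry the trivial abelian structure, so every subbrace of prime-power order is a centrally nilpotent left ideal.
\item $B$ is soluble, since $\{0,2,4\}$ is an ideal of index $2$.
\item However $\Fix(B)=\{0,3\}$, and this is not an ideal ($1\cdot 3\cdot 1^{-1}=5$). So the unique minimal ideal is $N=\{0,2,4\}$, and $N\cap\Fix(B)=0$.
\item Your fallback $\Fix(B)\cap P$ gives $\{0,3\}$ for $p=2$, again not an ideal.
\end{itemize}

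Hence Theorem~\ref{teo-cent-ideal-of-idealiser} cannot be brought to bear on any minimal ideal. Separately, the dominance result you quote is misstated. In a trivial skew brace every subgroup is a left ideal, so ``contained in every Sylow $p$-subbrace'' would force unique Sylow subgroups in every soluble group. The correct statement is ``contained in some''.

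Independently of this, neither case of your induction closes.

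In Case $K<B$, the equality $CN=K$ is, for groups, the fact that the image of a Carter subgroup is Carter, and the standard proof of that rests on conjugacy of Carter subgroups. Nothing of the sort is available for skew braces. Example~\ref{ex-carter-not-inv-quo} shows that the image of a Carter subbrace in a quotient need not be Carter, so this step would need a genuine proof under the present hypothesis. Also, Theorem~A of \cite{CampPerezPerez2026} gives idealisers only for left ideals, which $C$ need not be.

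Case $K=B$ contains no actual argument, and what it invokes is false:
\begin{itemize}
\item For the trivial brace $S_3$ with $N=A_3$, $B/N$ is nilpotent, yet the Sylow $2$-subbraces are not ideals.
\item Proposition~\ref{prop-carter-max-cn} gives only ``Carter $\Rightarrow$ maximal centrally nilpotent''. The converse fails: $A_3$ is maximal nilpotent in $S_3$ but normal.
\end{itemize}

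The missing idea is to run the induction with \emph{binilpotency} in place of central nilpotency, since binilpotency can be handled in each group separately.
\begin{itemize}
\item The left-ideal hypothesis enters only through the Frattini-type Proposition~\ref{prop-binilp-satur}: if $I\subseteq\Phi(B)$ and $B/I$ is binilpotent, then so is $B$. This works because $\Norm_{(B,+)}(P,+)$ and $\Norm_{(B,\cdot)}(P,\cdot)$ are subbraces when $P$ is a left ideal.
\item This yields, for a minimal ideal $I$ and the preimage $D$ of a binilpotent self-idealising subbrace of $B/I$, a binilpotent $C$ with $D=C+I$.
\item Taking $C$ maximal with this property makes it self-idealising in $D$: a subbrace $A\supseteq C$ idealising $C$ equals $C+(A\cap I)$, a sum of two binilpotent ideals, so $A$ is binilpotent and maximality forces $A=C$.
\item $C$ is then self-idealising in $B$ by Lemma~\ref{lemma-self-idealising-preim} and \cite[Proposition~4.11]{CampPerezPerez2026}. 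The supplement property is built in rather than derived from conjugacy.
\item Only at the end is central nilpotency of prime-power subbraces used, via \cite[Theorem~4.12]{BallesterEstebanFerraraPerezCTrombetti25-pacificjm-cent-nilpv}, to turn this binilpotent self-idealising $C$ into a Carter subbrace.
\end{itemize}
$\Fix(B)$ plays no role anywhere in this argument.
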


Since in a trivial skew brace every subbrace of prime-power order is a centrally nilpotent left ideal, Theorem~\ref{teo-soluble-Carter} generalises the existence of Carter subgroups in finite soluble groups.

Our second existence result concerns finite multipermutational skew braces, which are, in particular, soluble skew braces. This class of skew braces plays a central role in the study of multipermutational solutions to the Yang–Baxter equation.

\begin{athm}\label{teo-multiperm-Carter}
Let $B$ be a finite multipermutational skew brace. Then $B$ has Carter subbraces.
\end{athm}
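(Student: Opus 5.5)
We argue by induction on $|B|$, using the multipermutational structure. The key feature of a finite multipermutational skew brace is that its socle $\Soc(B)=\Ker\lambda\cap Z(B,+)$ is a nontrivial ideal whenever $B\neq 0$. Moreover, $B/\Soc(B)$ is again multipermutational, and every subbrace of a multipermutational skew brace is multipermutational. Put $S=\Soc(B)$ and $\bar B=B/S$. Since $|\bar B|<|B|$, induction gives a Carter subbrace $\bar C=C/S$ of $\bar B$, where $C$ is the full preimage, a subbrace of $B$ containing $S$.

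We then split into two cases. The first case is $C<B$. Then $C$ is multipermutational of smaller order, so it has a Carter subbrace $E$. We aim to show that $E$ is a Carter subbrace of $B$. Suppose $E$ is an ideal of a subbrace $H$ of $B$. We want to prove $H\le C$; then $H=E$, because $E$ is self-idealising in $C$. To get $H\le C$, we should show that $ES=C$. Then $\bar H$ idealises $\overline{E}=\bar C$, and self-idealisation of $\bar C$ gives $H\le C$.

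To obtain $ES=C$, we use that $S\le \Fix(B)$ and that $S$ is central in $(B,+)$, with $\lambda$ acting trivially through $S$. The aim is to show that $ES$ is a subbrace of $C$ and that $E$ is an ideal of $ES$. For the second point: elements of $S$ commute additively with everything, and $\lambda_s=\id$. So for $s\in S$ and $e\in E$ we have $s e s^{-1}$ given by $s\circ e = s+e$. Conjugation and $\lambda$-action by $S$ on $E$ should therefore be trivial or nearly so; this needs checking via $s\circ e\circ s^{-1}$. We also expect $S$ to normalise $E$ under $\circ$, using that $S$ is an ideal and Theorem~\ref{teo-cent-ideal-of-idealiser}, which gives $S\le \Cent_B(S)$ and $\Ideal_B$-type facts for subbraces of $\Fix(B)$. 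Granting this, self-idealisation of $E$ in $C$ gives $S\le E$. Hence $ES=E$, and then $\bar E$ is centrally nilpotent in $\bar C$. By maximality of Carter subbraces among centrally nilpotent subbraces (Proposition~\ref{prop-carter-max-cn}, applied in $\bar B$), we get $\bar E=\bar C$, so $E=C$ up to $S$. Since $S\le E$, this gives $E=C$, which contradicts $C<B$ being a proper reduction only mildly. More precisely, it shows $C$ itself is centrally nilpotent, and then the argument for self-idealisation is as above.

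The second case is $C=B$, so $\bar B$ is centrally nilpotent. Then $B$ is a central extension-like object: $S$ is annihilated by the lambda action and is additively central. We want to conclude that $B$ is centrally nilpotent, because $S\subseteq$ the annihilator $\operatorname{Ann}(B)=\Soc(B)\cap Z(B,\cdot)$. This inclusion fails in general, so instead we pass to a Carter subgroup-like construction inside $B$. We would take $E$ a minimal subbrace with $E+S=B$, or iterate the socle series from the top. The fallback is to replace $\Soc$ by the annihilator and use that $B$ multipermutational and $\bar B$ centrally nilpotent force $B$ to be "right nilpotent plus centrally nilpotent top". Then a Carter subbrace is obtained as $\Cent$-type complement using Theorem~A of \cite{CampPerezPerez2026} on idealisers of subbraces in $\Fix(B)$.

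The main obstacle is the interaction between the socle and central nilpotency: $S$ is $\lambda$-fixed and additively central but need not be multiplicatively central. The extension step therefore does not automatically preserve central nilpotency, and the case $C=B$ with $B$ not centrally nilpotent requires a genuinely new idealiser argument using the existence of $\Ideal_B(A)$ for $A\le\Fix(B)$.
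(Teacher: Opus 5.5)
\textbf{There is a genuine gap: the proposal does not prove the theorem.} Your second case, $C=B$ (that is, $B/\Soc(B)$ centrally nilpotent but $B$ not), is left open; you only list candidate strategies, and this is exactly where the difficulty lies.

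Your first case also rests on claims that fail. First, $\Soc(B)\not\subseteq\Fix(B)$ in general. Take $(B,+)=\Z_3\oplus\Z_2$ with $\lambda_{(x,y)}(u,v)=((-1)^y u,v)$. Then:
\begin{itemize}
\item $(B,\cdot)\cong S_3$;
\item $\Soc(B)=\Z_3\oplus 0$ while $\Fix(B)=0\oplus\Z_2$;
\item $B/\Soc(B)$ is trivial of order $2$, so $B$ is multipermutational.
\end{itemize}
So Theorem~\ref{teo-cent-ideal-of-idealiser} tells you nothing about $S$.

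Second, and more seriously, $S$ need not idealise a Carter subbrace $E$ of a subbrace containing $S$. In the same example, $E=0\oplus\Z_2$ is a Carter subbrace of $C:=B\supseteq S$: the only subbraces containing $E$ are $E$ and $B$, and $(E,\cdot)$ is not normal in $S_3$. Yet $(0,1)\ast(1,0)=\lambda_{(0,1)}(1,0)-(1,0)=(1,0)\notin E$, so $[S,E]\not\subseteq E$, and $S\not\subseteq E$. The socle is additively central and $\lambda$ acts trivially through it, but it is neither multiplicatively central nor $\lambda$-fixed. So the step ``$S$ idealises $E$, hence $S\le E$, hence $E=C$'' cannot be rescued. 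There is also a smaller slip: maximality of $\bar C$ among centrally nilpotent subbraces says nothing about a subbrace $\bar E\subseteq\bar C$. You would need the normaliser condition inside $\bar C$ instead.

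\textbf{The missing idea.} Stop trying to force $S$ into the candidate. Instead take a suitable supplement, and work with binilpotency rather than central nilpotency until the very end. This is the paper's route.
\begin{itemize}
\item A multipermutational $B$ is soluble of nilpotent type. In every subbrace $A$, the Sylow subgroups of $(A,+)$ are characteristic, hence $\lambda$-invariant left-ideal Sylow subbraces, so $B$ is $\SLI_{subbr}$. Subbraces of prime-power order are multipermutational, hence centrally nilpotent. The paper then applies Theorem~\ref{teo-Carter-thechnical}.
\item That theorem inducts over an abelian ideal $I$. Your $S$ would serve, since $st=s+t$ on $S$ makes it an abelian brace. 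Given a binilpotent self-idealising $D/I$ in $B/I$, a Frattini argument using the left-ideal Sylow subbraces (Proposition~\ref{prop-binilp-satur}) yields binilpotent subbraces $C$ with $D=C+I$.
\item Choose $C$ maximal among these. Any $A\supseteq C$ idealising $C$ satisfies $A=C+(A\cap I)$ by Dedekind. This is a sum of binilpotent ideals, so $A$ is binilpotent, and maximality gives $A=C$.
\item \cite[Proposition~4.11]{CampPerezPerez2026} and Lemma~\ref{lemma-self-idealising-preim} transfer self-idealisation from $D$ to $B$.
\item \cite[Theorem~4.12]{BallesterEstebanFerraraPerezCTrombetti25-pacificjm-cent-nilpv} upgrades $C$ from binilpotent to centrally nilpotent.
\end{itemize}
This argument treats your cases $C=B$ and $C<B$ uniformly.
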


In this case we can say more. By \cite[Corollary~2.23]{SmoktunowiczVendramin18}, $(B,\cdot)$ is soluble, so Carter subgroups of $(B,\cdot)$ exist. In Section~\ref{sec-Carter-teo-mul} we investigate when such Carter subgroups are subbraces, and in particular Carter subbraces.

\begin{athm}\label{teo-multiperm-Carter-multi}
Let $B$ be a finite multipermutational skew brace. Then there exists a unique Carter subgroup of $(B,\cdot)$ that is a subbrace. Furthermore, it is a Carter subbrace and contains the idealiser of the Sylow system of $B$.
\end{athm}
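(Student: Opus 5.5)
We need a plan for Theorem C: in a finite multipermutational skew brace $B$, there is a unique Carter subgroup of $(B,\cdot)$ that is a subbrace; it is a Carter subbrace and it contains the idealiser of the Sylow system of $B$.

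The plan is to argue by induction on $|B|$, using the socle. Since $B$ is multipermutational, $\Soc(B)=\Ker\lambda\cap Z(B,+)$ is a nontrivial ideal. We set $\bar B=B/\Soc(B)$, which is again multipermutational and of smaller order. By induction, $\bar B$ has a unique Carter subgroup $\bar C$ of $(\bar B,\cdot)$ that is a subbrace. We let $H$ be its full preimage in $B$; it is a subbrace containing $\Soc(B)$.

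\begin{enumerate}
\item \textbf{Construction.} We pass to $H$. If $H<B$, induction inside $H$ gives a unique Carter subgroup of $(H,\cdot)$ that is a subbrace. When $H=B$, the quotient $\bar B$ is itself a Carter subgroup, hence $(\bar B,\cdot)$ is nilpotent. The key observation here is that on $\Soc(B)$ the brace structure is trivial: $\lambda$ acts trivially and the addition is central. So for subbraces $C$ with $C\Soc(B)=B$, the conditions ``Carter subgroup of $(B,\cdot)$'' and ``subbrace'' interact well.
\item \textbf{Using a Sylow system.} We fix a Sylow system $\Sigma$ of $(B,\cdot)$ whose members are subbraces; this is the Sylow system of $B$ implicit in the statement. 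We take $C=\Norm_{(B,\cdot)}(\Sigma)$. In soluble groups, system normalisers relate to Carter subgroups: a Carter subgroup is a system normaliser of $\Sigma$ relative to the nilpotent formation projector, and it contains $\Norm(\Sigma)$. Concretely, we would construct $C$ as the nilpotent projector containing $\Norm(\Sigma)$, following Carter's classical inductive construction via the minimal normal subgroup $\Soc(B)$. At each step, when lifting from $\bar B$, Carter's argument takes a complement, namely a Hall subgroup of $H$ for the primes dividing the quotient nilpotent group, together with the normaliser of a Sylow subgroup. Because $\Soc(B)\subseteq\Fix(B)$ and the Sylow members are subbraces, these are subbraces; for this we use the Hall/Sylow subbrace theorems together with Theorem~\ref{teo-cent-ideal-of-idealiser}.
\item \textbf{Carter subbrace.} We must show that $C$ is centrally nilpotent and self-idealising. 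Central nilpotency follows inductively: $C\cap\Soc(B)$ lies in the annihilator of $C$, because socle elements are fixed, additively central and in $\Ker\lambda$, and the image $C\Soc(B)/\Soc(B)$ is centrally nilpotent by induction. Some care is needed because we need $C\cap \Soc(B)$ central in $(C,\cdot)$, which holds since $C$ is nilpotent and the relevant component is a Sylow part. Self-idealising: if $C$ is an ideal in $D$, then $C$ is normal in $(D,\cdot)$. Since $C$ is a Carter subgroup of $(B,\cdot)$, it is self-normalising, so $D=C$.
\item \textbf{Uniqueness.} Carter subgroups are conjugate in $(B,\cdot)$, say $C^g$. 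Both $C$ and $C^g$ contain the system normaliser of the correspondingly conjugated system. We then argue that if $C^g$ is a subbrace, then $g$ can be chosen to normalise... Here uniqueness probably comes from the induction: the image is the unique one in $\bar B$, so $C^g\Soc(B)=C\Soc(B)$. This reduces to the case $B=C\Soc(B)$, where $C$ is a complement-type subgroup. There the subbraces that are Carter subgroups are exactly the $\lambda$-compatible conjugates, and since $\lambda_s=\id$ for $s\in\Soc(B)$, conjugation by socle elements preserves the additive structure only when it is trivial.
\end{enumerate}

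The main obstacle I expect is exactly this last step: proving that conjugates of $C$ by elements of $\Soc(B)$ which are not in $C$ fail to be additively closed. I would first try to compute, for $s\in\Soc(B)$, the expression $s c s^{-1}=s+\lambda_s(c\cdot s^{-1})$, and compare the additive closure of $C^s$ with that of $C$.
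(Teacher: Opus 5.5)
Your plan has genuine gaps, and the decisive existence step is never proved. In the reduced case $H=B$, where $B=C\Soc(B)$ and $(B/\Soc(B),\cdot)$ is nilpotent, nothing you write shows that some Carter subgroup of $(B,\cdot)$ is a subbrace. The justification you give rests on $\Soc(B)\subseteq\Fix(B)$, which is false even for centrally nilpotent braces. Let $(B,+)=\Z_2^3$ with basis $e_1,e_2,e_3$, and let $\lambda_b=M^{b_1}$, where $b_1$ is the $e_1$-coordinate of $b$ and $M$ fixes $e_1,e_3$ and sends $e_2\mapsto e_2+e_3$. Since $M$ preserves first coordinates and $M^2=\id$, this is a brace. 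Then $\Soc(B)=\langle e_2,e_3\rangle_+$, $Z(B)=\langle e_3\rangle_+$ and $B/Z(B)$ is trivial, but $\lambda_{e_1}(e_2)=e_2+e_3$.

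Theorem~\ref{teo-cent-ideal-of-idealiser}, which concerns centralisers, does not help here. Your step~2 also conflates the system normaliser $\Norm_{(B,\cdot)}(\Sigma_B)$ with a Carter subgroup; in general the former is only contained in $\Cent_{\Sigma_B}$ (Proposition~\ref{prop-sys-id-contained}). The missing argument is that in the reduced case the two coincide. Suppose $N$ is a nilpotent normal subgroup of $(B,\cdot)$ with $B=\Cent_{\Sigma_B}N$, and let $P\in\Sigma_B$ be a Sylow $p$-subgroup. Since $\Sigma_B$ reduces into $\Cent_{\Sigma_B}$, an order count gives $P=(P\cap\Cent_{\Sigma_B})N_p$. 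Hence $\Cent_{\Sigma_B}$ normalises each member of $\Sigma_B$ and equals $\Norm_{(B,\cdot)}(\Sigma_B)=\Ideal_B(\Sigma_B)$. This is a subbrace because the members of $\Sigma_B$ are characteristic in the nilpotent group $(B,+)$, hence left ideals.

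The paper does this with $N$ a minimal ideal that is a $p$-brace (Lemma~\ref{lemma-cont-sylow-ideliser}, Proposition~\ref{prop-sep-Carter-subbr}), using that multipermutational braces are separable of nilpotent type. Your choice $N=\Soc(B)$ would also work once this argument is supplied. So your instinct to take $C=\Norm_{(B,\cdot)}(\Sigma_B)$ is right in the reduced case, but it needs this proof.

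The same example breaks step~3. $B$ is centrally nilpotent, so $C=B$, yet $C\cap\Soc(B)=\Soc(B)$ is not central in $(C,\cdot)$, since $e_1e_2=e_1+e_2+e_3\neq e_2+e_1=e_2e_1$. Nilpotency of $(C,\cdot)$ does not make a normal subgroup central. Central nilpotency should instead come, as in Proposition~\ref{prop-CN-subbr-Carter}, from $C$ being binilpotent with centrally nilpotent Sylow subbraces.

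Finally, uniqueness is left open, and the conjugation-by-socle computation is unnecessary. The observation your plan lacks is Lemma~\ref{lemma-sylsys-red-subbr}: in a skew brace of nilpotent type, $\Sigma_B$ reduces into $(A,\cdot)$ for every subbrace $A$. This is because $\{P\cap A : P\in\Sigma_B\}$ is the Sylow system of $(A,+)$ and hence a Sylow system of $(A,\cdot)$. A Sylow system reduces into exactly one Carter subgroup (Alperin), so any Carter subgroup of $(B,\cdot)$ that is a subbrace must be $\Cent_{\Sigma_B}$. This gives uniqueness at once and tells you exactly which Carter subgroup you must prove is a subbrace.
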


\section{Preliminaries}

Throughout, all skew braces will be finite. Let $B$ be a skew brace and
$\mathfrak{X}$ a class of groups. We say that $B$ is of
\emph{$\mathfrak{X}$ type} if $(B,+)$ is in $\mathfrak{X}$, and that $B$ is
\emph{bi-$\mathfrak{X}$} if both $(B,+)$ and $(B,\cdot)$ belong to
$\mathfrak{X}$. An \emph{abelian skew brace} is a trivial skew brace of abelian type.

Given subsets $X,Y\subseteq B$, we write $\langle X\rangle_+$ and
$[X,Y]_+$ for the subgroup generated by $X$ and the commutator of $X$
and $Y$ in $(B,+)$, and $\langle X\rangle_\bullet$, $[X,Y]_\bullet$ for
the corresponding objects in $(B,\cdot)$. We also set
\[
X\ast Y=\langle x\ast y=-x+xy-y \mid x\in X,\ y\in Y\rangle_+.
\]
 Given a subset $X\subseteq B$, we define the \emph{subbrace generated} (respectively the \emph{ideal generated}) by $X$, denoted by $\langle X\rangle$ ( respectively $\langle X\rangle^B$), as the smallest subbrace (respectively ideal) of $B$ containing $X$. Given a subbrace $A$, we denote the largest ideal contained in $A$ as $\Core_B(A)$.

The lambda action yields several substructures as the \emph{kernel of lambda} and the \emph{fixed points} of $B$
\begin{align*}
   \Ker\lambda&=\{a\in B \mid \lambda_a(b)=b \text{ for all } b\in B\},\\ 
   \Fix(B)&=\{b\in B \mid \lambda_a(b)=b \text{ for all } a\in B\}.
\end{align*}
Also several ideals as the \emph{socle} and \emph{centre} of $B$
\[
\Soc(B)=\Ker\lambda\cap Z(B,+),
\]
\[
Z(B)=\Soc(B)\cap Z(B,\cdot).
\]
Iterating these constructions gives the \emph{socle series} and the
\emph{centre series}
\[
\Soc_1(B)=\Soc(B),\quad
\Soc_{n+1}(B)/\Soc_n(B)=\Soc\bigl(B/\Soc_n(B)\bigr),
\]
\[
Z_1(B)=Z(B),\quad
Z_{n+1}(B)/Z_n(B)=Z\bigl(B/Z_n(B)\bigr).
\]
We say that $B$ is \emph{centrally nilpotent} if $Z_n(B)=B$ for some
$n\ge1$, and \emph{multipermutational} if $\Soc_n(B)=B$ for some
$n\ge1$. Central nilpotency is the natural skew-brace generalisation of
nilpotency of groups \cite{BonattoJedlicka23,
BallesterEstebanFerraraPerezCTrombetti25-pacificjm-cent-nilpv}, while
multipermutational skew braces play a central role in the study of
multipermutational solutions of the Yang-Baxter equation
\cite[Theorem~4.13]{CedoJespersKubatVanAntwerpenVerwimp23}. If $B$
admits a series of ideals with abelian factors, $B$ is \emph{soluble}
\cite{BallesterEstebanJimenezPerezC24-solubleskewbraces}.

The \emph{Frattini subbrace} of $B$ \cite{BallesterEstebanJimenezPerezC24-solubleskewbraces}
is
\[
\Phi(B)=\bigcap\{M \mid M \text{ is a maximal subbrace of } B\}.
\]
Notice that $\Phi(B)$ coincide with the set of non-subbrace generating elements of $B$.

Given subbraces $A,C$, the \emph{subbrace commutator} defined in
\cite{CampPerezPerez2026} is
\[
[A,C]=\bigl\langle [A,C]_+\cup [A,C]_\bullet\cup A\ast C\cup C\ast A
\bigr\rangle.
\]
We say that $A$ \emph{idealises} (respectively \emph{centralises}) $C$ if $[A,C]\subseteq C$ (respectively $[A,C]=0$). In general, idealisers and centralisers of an arbitrary subbrace need not exist; see \cite[Example~A] {BallesterEstebanFerraraPerezCTrombetti25-pacificjm-cent-nilpv}. However, \cite[Theorem~A]{CampPerezPerez2026} guarantees the existence of the idealiser for left ideals,
\[\Ideal_B(A)=\Norm_{(B,+)}(A,+)\cap \Norm_{(B,\cdot)}(A,\cdot),\]
and of the centraliser for subbraces contained in $\Fix(B)$,
\[
\Cent_B(A)=\Cent_{(B,+)}(A,+)\cap \Cent_{(B,\cdot)}(A,\cdot).
\]

Assume that $B$ is of nilpotent type and let
$\Sigma_B$ be the set of Sylow subgroups of $(B,+)$. By
\cite[Proposition~7.5]{CampPerezPerez2026}, $\Sigma_B$ is also a Sylow
system of $(B,\cdot)$. We call it the \emph{Sylow system} of $B$.
Its \emph{idealiser} is
\[
\Ideal_B(\Sigma_B)
=\bigcap_{P\in\Sigma_B}\Ideal_{B}(P)=\bigcap_{P\in\Sigma_B}\Norm_{(B,\cdot)}(P,\cdot)
=\Norm_{(B,\cdot)}(\Sigma_B,\cdot),
\]
which is a subbrace of $B$ by \cite[Theorem~A]{CampPerezPerez2026}.

In \cite[Proposition~3.18]{BournFacchiniPompili23} another definition of commutator for ideals is introduced and later characterised in \cite[Theorem 3.6]{BallesterEstebanFerraraPerezCTrombetti25-pacificjm-cent-nilpv}. Let $B$ be a skew brace and let $I$ and $J$ be two ideals of $B$. The \emph{ideal commutator} of $I$ and $J$ is defined as
\begin{align*}
[I,J]^B&=\langle [I,J]_+\cup [I,J]_\bullet\cup\{ij-(i+j)\;|\;i\in I,\;j\in J\}\rangle^B\\&=\langle I\ast J+J\ast I+[I,J]_+\rangle^B.
\end{align*}
Also, in \cite[Proposition~3.19]{BournFacchiniPompili23} a notion of ideal centraliser is defined. We will use a different notation to avoid confusion.  Let $B$ be a skew brace and let $I$ be an ideal of $B$. The \emph{ideal centraliser} of $I$ in $B$, denoted by $\tilde \Cent_B(I)$, is the unique maximal ideal of $B$ such that $[I,\tilde \Cent_B(I)]^B=0$.

\section{Centraliser and idealiser of a fixed subbrace}\label{sec-cent-ideal-fix}

The notions of idealiser and centraliser generalise the classical
notions of normaliser and centraliser in group theory. In a group $G$,
if $H\le G$, then $\Cent_G(H)$ is always a normal subgroup of $\Norm_G(H)$; a basic structural fact underlying much of the structural analysis of
finite groups. The natural question in the skew-brace setting is whether the same holds whenever both the idealiser and the centraliser exist.

\begin{teo}\label{teo-cent-ideal-of-idealiser}
Let $B$ be a skew brace and $A$ a subbrace of $B$ contained in $\Fix(B)$. Then $\Cent_B(A)$ is an ideal of $\Ideal_B(A)$.
\end{teo}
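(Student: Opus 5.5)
The plan is to verify directly the three conditions that make $C:=\Cent_B(A)$ an ideal of $I:=\Ideal_B(A)$, using throughout that $A\subseteq\Fix(B)$. By \cite[Theorem~A]{CampPerezPerez2026}, both $C$ and $I$ are subbraces, and $C\subseteq I$ because an element centralising $A$ in both groups certainly normalises it in both. The three conditions are:
\begin{itemize}
\item[(i)] $\lambda_x(C)\subseteq C$ for all $x\in I$;
\item[(ii)] $C$ is normal in $(I,+)$;
\item[(iii)] $C$ is normal in $(I,\cdot)$.
\end{itemize}

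First I will reformulate $C$. For $a\in\Fix(B)$ and any $y\in B$ we have $ya=y+\lambda_y(a)=y+a$, while $ay=a+\lambda_a(y)$. Hence, under the additive condition $y+a=a+y$, multiplicative commutation $ay=ya$ is equivalent to $\lambda_a(y)=y$. This gives
\[C=\{c\in B \mid c+a=a+c \text{ and } \lambda_a(c)=c \text{ for all } a\in A\}.\]
More generally, for $a\in A$, any two of the following three conditions imply the third: $y+a=a+y$, $ay=ya$, and $\lambda_a(y)=y$. This triangle of equivalences is the tool for all three conditions.

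For (i), let $x\in I$ and $c\in C$. Since $\lambda_x(a)=a$, we get $\lambda_x(c)+a=\lambda_x(c+a)=\lambda_x(a+c)=a+\lambda_x(c)$, so $\lambda_x(c)$ commutes additively with $A$. For the $\lambda$-condition, write $ax=x\,a'$ with $a'=x^{-1}ax\in A$, which is possible because $x$ normalises $(A,\cdot)$. Then
\[\lambda_a\lambda_x(c)=\lambda_{ax}(c)=\lambda_{xa'}(c)=\lambda_x\lambda_{a'}(c)=\lambda_x(c).\]
For (ii), additive commutation of $-x+c+x$ with $a$ follows as in groups, since $x+a-x\in A$. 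For the $\lambda$-condition, I compute $\lambda_a(x)=-a+ax=-a+xa'=-a+x+a'$, using that $a'\in\Fix(B)$. Then
\[\lambda_a(-x+c+x)=-\lambda_a(x)+\lambda_a(c)+\lambda_a(x)=-a'-x+a+c-a+x+a'=-x+c+x,\]
because $a$ and $a'$ commute additively with $c$.

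For (iii), the conjugate $y=xcx^{-1}$ commutes multiplicatively with every $a\in A$, exactly as in groups, since $x^{-1}ax\in A$. By the triangle it then suffices to show $\lambda_a(y)=y$, or alternatively $y+a=a+y$. Now $\lambda_y=\lambda_x\lambda_c\lambda_x^{-1}$, so $\lambda_a(y)$ is not immediately accessible. I would first try rewriting $y$ as $y=x+\lambda_x(c\,x^{-1})$ and then $c\,x^{-1}=c+\lambda_c(x^{-1})$, so that $y=x+\lambda_x(c)+\lambda_{xc}(x^{-1})$. I would then apply $\lambda_a$, using $\lambda_a\lambda_x=\lambda_x\lambda_{a'}$, $\lambda_{a'}(c)=c$, and (i)–(ii) already established. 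Should this bookkeeping become unwieldy, the fallback is to use the known criterion that a left ideal $L$ which is normal in $(I,+)$ is an ideal as soon as $cx-x\in L$ for all $c\in L$ and $x\in I$ (equivalently $xcx^{-1}\in L$). One would verify $cx-x=c+\lambda_c(x)-x$ lies in $C$ via the reformulation above. I expect this step (iii) to be the main obstacle; steps (i) and (ii) are routine once the fixed-point identities $ya=y+a$ and $\lambda_a\lambda_x=\lambda_x\lambda_{x^{-1}ax}$ are in place.
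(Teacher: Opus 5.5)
Your steps (i) and (ii) are correct and are essentially the first two steps of the paper's proof. You conjugate as $-x+c+x$ and check the $\lambda$-condition, whereas the paper uses $x+c-x$ and multiplicative commutation. In the last equality of (ii), what you actually need is that $a'$ commutes additively with $-x+c+x$, which is the first half of (ii); that $a'$ commutes with $c$ is not enough.

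The genuine gap is (iii). You do not prove it, and it carries the real content of the theorem. Your first route cannot close it. Carrying out the computation you describe, with $axc=xca'$ and $a'x^{-1}=x^{-1}a=x^{-1}+a$, yields
\[\lambda_a(y)=-a+x+a'+\lambda_x(c)-a'+\lambda_{xc}(x^{-1})+a=-a+y+a.\]
This only restates $ay=ya$, because $ay=a+\lambda_a(y)$ and $ya=y+a$. It says nothing about $y+a=a+y$, which is exactly the unknown. Your fallback is the right reduction: it is the paper's third step, $C\ast I\subseteq C$, equivalently $cx-x=c+(c\ast x)\in C$. But ``one would verify'' is not a verification.

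The idea you are missing is that $a\ast x=\lambda_a(x)-x$ lies in $A$ for $a\in A$ and $x\in I$. From $ax=xa'$ and $a'\in\Fix(B)$ you get $\lambda_a(x)=-a+x+a'$, so $a\ast x=-a+(x+a'-x)\in A$.

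Now set $z=c\ast x=\lambda_c(x)-x$ and write $a+x=x+a_2$ with $a_2\in A$. Then $a+z=\lambda_c(a+x)-x=\lambda_c(x)+a_2-x=z+a$. Since $ac=ca$,
\[\lambda_a(z)=\lambda_c(\lambda_a(x))-\lambda_a(x)=\lambda_c(a\ast x)+\lambda_c(x)-x-(a\ast x)=(a\ast x)+z-(a\ast x)=z.\]
Here you use that $\lambda_c$ fixes $a\ast x\in A\subseteq\Fix(B)$ and that $z$ commutes additively with $A$. By your triangle of equivalences, $z\in C$, and your criterion then gives normality of $(C,\cdot)$ in $(I,\cdot)$. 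Without this short but non-routine computation, the proposal does not establish that $C$ is an ideal of $I$.
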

\begin{proof}
Let us set $C = \Cent_B(A)$ and $I = \Ideal_B(A)$, obviously $C \subseteq I$. 

Let $x \in I$, $c \in C$ and $a \in \Fix(B)$. Denote $y=x+c-x$, $z=c\ast x=\lambda_c(x)-x$ and $a_x=a\ast x=\lambda_a(x)-x$. Because $x \in I$, there exist $a_2,a_3 \in A$ such that $a+x = x+a_2$ and $ax=xa_3$.

Let us see first that $\lambda_x(C) \subseteq C$ for all $x \in I$. 
\begin{align*}
a + \lambda_x(c) &= \lambda_x(\lambda_{x^{-1}}(a) + c) = \lambda_x(a+c)=\lambda_x(c+a) = \lambda_x(c) + \lambda_x(a) \\&= \lambda_x(c) + a.\\
    a\lambda_x(c)&=a+\lambda_{ax}(c)=a+\lambda_{xa_3}(c)=a+\lambda_x(c)=\lambda_x(c)+a\\&=\lambda_x(c)a.
\end{align*}
Therefore, $\lambda_x(c) \in C$ and $C$ is a left ideal of $I$.

Let us see now that $(C,+)$ is a normal subgroup of $(I,+)$.
\begin{align*}
    a+y&=a+x+c-x=x+a_2+c-x=x+c+a_2-x\\&=y+x+a_2-x=y+a.\\
    ay&=a+\lambda_a(y)=a+\lambda_a(x)+\lambda_a(c)-\lambda_a(x)=ax+c-(-a+ax)\\&=xa_3+c-xa_3+a=x+a_3+c-a_3-x+a=x+c-x+a\\&=y+a=ya.
\end{align*}
Hence, $y \in C$ and $C$ is a strong left ideal of $I$.

Finally, let us see that $C \ast I \subseteq C$.
\begin{align*}
    a+z&=\lambda_c(a)+\lambda_c(x)-x=\lambda_c(a+x)-x=\lambda_c(x+a_2)-x\\&=\lambda_c(x)+a_2-x=\lambda_c(x)-x+x+a_2-x=z+a.\\
    az&=a+\lambda_a(z)=a+\lambda_a(\lambda_c(x)-x)=a+\lambda_{ac}(x)-\lambda_a(x)\\&=a+\lambda_c(a_x+x)-(a_x+x)=a+\lambda_c(a_x)+\lambda_c(x)-x-a_x\\&=a+a_x+z-a_x=a+z=z+a=za.
\end{align*}
For the last line we used that $z$ commutes additively with every element of $A$, in particular with $a_x$.

Therefore, $z \in C$, and $C$ is an ideal of $I$.\qedhere
\end{proof}

For an ideal $I$ contained in $\Fix(B)$ both definitions of centralisers are related in \cite[Proposition~5.8]{CampPerezPerez2026}
\[\Core_B(\Cent_B(I)) = \widetilde{\Cent}_B(I).\]
It was left open in~\cite{CampPerezPerez2026} whether, under these hypotheses, they are equal. Theorem~\ref{teo-cent-ideal-of-idealiser} settles this question affirmatively.

\begin{cor}\label{cor-cent-eq-idealcent}
Let $B$ be a skew brace and $I$ an ideal of $B$ such that $I \subseteq \Fix(B)$. Then $\Cent_B(I)$ is an ideal of $B$, and
\[\Cent_B(I) = \widetilde{\Cent}_B(I).\]
\end{cor}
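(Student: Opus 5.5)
Since $I$ is an ideal of $B$, it is in particular a left ideal. By \cite[Theorem~A]{CampPerezPerez2026} its idealiser therefore exists, and because $I$ is an ideal we have $\Norm_{(B,+)}(I,+)=B=\Norm_{(B,\cdot)}(I,\cdot)$, so
\[\Ideal_B(I)=B.\]
Moreover $I\subseteq\Fix(B)$, so we may apply Theorem~\ref{teo-cent-ideal-of-idealiser} with $A=I$. It gives that $\Cent_B(I)$ is an ideal of $\Ideal_B(I)=B$. This proves the first assertion.

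For the equality, I will use \cite[Proposition~5.8]{CampPerezPerez2026}, which states that $\Core_B(\Cent_B(I))=\widetilde{\Cent}_B(I)$ under exactly these hypotheses. Since $\Cent_B(I)$ is already an ideal of $B$, it is the largest ideal contained in itself, so $\Core_B(\Cent_B(I))=\Cent_B(I)$. Hence $\Cent_B(I)=\widetilde{\Cent}_B(I)$.

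The only point needing care is the step $\Ideal_B(I)=B$. This requires the explicit description of the idealiser as the intersection of the additive and multiplicative normalisers, which \cite[Theorem~A]{CampPerezPerez2026} provides for left ideals. It also requires knowing that an ideal is normal in both $(B,+)$ and $(B,\cdot)$, which holds by definition. Everything else is a direct combination of Theorem~\ref{teo-cent-ideal-of-idealiser} and the cited proposition.
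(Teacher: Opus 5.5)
Your proposal is correct and matches the paper's proof essentially step for step. You show $\Ideal_B(I)=B$, apply Theorem~\ref{teo-cent-ideal-of-idealiser} to get that $\Cent_B(I)$ is an ideal of $B$ and hence equals its own core, and then conclude $\Cent_B(I)=\widetilde{\Cent}_B(I)$ from \cite[Proposition~5.8]{CampPerezPerez2026}.
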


\begin{proof}
Since $I$ is an ideal of $B$, it is idealised by all of $B$, so $\Ideal_B(I) = B$. As $I \subseteq \Fix(B)$, Theorem~\ref{teo-cent-ideal-of-idealiser} applies and shows that $\Cent_B(I)$ is an ideal of $\Ideal_B(I) = B$. Therefore, $\Core_B(\Cent_B(I)) = \Cent_B(I)$ and by \cite[Proposition~5.8]{CampPerezPerez2026}, 
\[\Cent_B(I) =\Core_B(\Cent_B(I)) = \widetilde{\Cent}_B(I).\qedhere\]
\end{proof}

\section{Basic properties of Carter subbraces}\label{sec-basic-prop}

\begin{prop}\label{prop-carter-max-cn}
    Let $B$ be a skew brace, then Carter subbraces are maximal centrally nilpotent subbraces.
\end{prop}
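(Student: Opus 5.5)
The plan mirrors the group-theoretic argument: a Carter subgroup cannot sit properly inside a nilpotent subgroup, because nilpotent groups satisfy the normaliser condition. So the key step is a \emph{normaliser condition} for centrally nilpotent skew braces: if $N$ is centrally nilpotent and $C$ is a proper subbrace of $N$, then $C$ is a proper ideal of some subbrace $D$ with $C\subsetneq D\subseteq N$.

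Assuming that key step, the argument is short. Let $C$ be a Carter subbrace of $B$, and suppose $C\subseteq N$ with $N$ a centrally nilpotent subbrace of $B$. If $C\neq N$, the key step gives a subbrace $D$ of $N$, hence of $B$, with $C\subsetneq D$ and $C$ an ideal of $D$. This contradicts the fact that $C$ is self-idealising. Hence $C=N$. Since $C$ is itself centrally nilpotent by definition, it is maximal among centrally nilpotent subbraces.

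To prove the key step, I would use the upper central series $0=Z_0(N)\le Z_1(N)\le\dots\le Z_n(N)=N$ of $N$. Since $C\neq N$, there is a largest index $i$ with $Z_i(N)\subseteq C$, and $i<n$. Set $D=C+Z_{i+1}(N)$.

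I would first check that $D$ is a subbrace. Since $Z_{i+1}(N)$ is an ideal of $N$, the sum $C+Z_{i+1}(N)$ is a subgroup of $(N,+)$. Since ideals are normal in both groups and $x+k=x\lambda_x^{-1}(k)$, this sum also equals $C\cdot Z_{i+1}(N)$, so it is a subbrace. By the choice of $i$, $D\neq C$.

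The main obstacle is showing that $C$ is an ideal of $D$. I would pass to the quotient $\bar N=N/Z_i(N)$, where $\bar Z=Z_{i+1}(N)/Z_i(N)=Z(\bar N)$. Elements of the centre lie in $\Ker\lambda\cap Z(\bar N,+)\cap Z(\bar N,\cdot)$, so they act trivially under $\lambda$, are central in both groups, and also lie in $\Fix(\bar N)$, because $\lambda_a(z)=-a+az=-a+za=-a+z+\lambda_z(a)=z$. Given $\bar d=\bar c+\bar z$, I would then verify the three conditions.
\begin{itemize}
\item $\lambda$-invariance: $\lambda_{\bar d}=\lambda_{\bar c\bar z'}=\lambda_{\bar c}$, which preserves $\bar C$.
\item Additive normality: conjugation by $\bar z$ is trivial and conjugation by $\bar c$ preserves $\bar C$.
\item Multiplicative normality: $\bar d=\bar c\bar z'$ with $\bar z'$ central in $(\bar N,\cdot)$, so the same argument applies.
\end{itemize}
Thus $\bar C$ is an ideal of $\bar D$.

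Since $Z_i(N)\subseteq C$, these conditions lift from the quotient to $N$, and $C$ is an ideal of $D$. This proves the key step and hence the statement.
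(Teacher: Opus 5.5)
Your proof is correct and has the same structure as the paper's: the paper argues exactly as in your second paragraph, but obtains the strictly larger subbrace that idealises $C$ by citing the normaliser condition for centrally nilpotent skew braces, \cite[Proposition~4.13]{CampPerezPerez2026}. You instead prove that normaliser condition directly, taking $D=C+Z_{i+1}(N)$ with $i$ maximal such that $Z_i(N)\subseteq C$, and your verification is sound: $D$ is a subbrace, and $C$ is an ideal of $D$ because modulo $Z_i(N)$ the centre lies in $\Ker\lambda\cap\Fix(\bar N)$ and is central in both groups.
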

\begin{proof}
    Let $C$ be a Carter subbrace and consider $A$ a centrally nilpotent subbrace of $B$ such that $C\subseteq A$. By \cite[Proposition~4.13]{CampPerezPerez2026} if $C\neq A$ there exists a subbrace $S$ of $A$ such that $[C,S]\subseteq C$ and $C\subsetneq S$, arriving to a contradiction. \qedhere
\end{proof}

Carter subbraces are not invariant by quotients.
\begin{ex}\label{ex-carter-not-inv-quo}
Consider {\small \[ (B,+)=K =\langle a, b, x\;|\; 3a = 3b = 2x= 0, [a,b]=[a,x]=0, [x,b]=b\rangle\cong C_3\times S_3.\] }
Let $f$ be the automorphism of  $K$ given by
\[
\begin{aligned}
f\colon a &\longmapsto a\\
b &\longmapsto  b\\
x&\longmapsto b+x
\end{aligned}
\]
Note that $E=\langle f\rangle\cong C_3$.

Consider $G=[K]E$ the semidirect product with respect the natural action, whose elements will be written in multiplicative notation. We will also call $a,b,x,f$ the images to the corresponding elements of $K$ and $E$ by the embedding of $K$ and $E$ in $G$ respectively.

Let $H=\langle af, b, x\rangle\leq G$ by simple computations we conclude that 
\begin{align*}
 H =\{a^ib^jx^kf^i\;|\; 0\leq i,j\leq 2,\; 0\leq k\leq 1\},   
\end{align*}
 $axf$, $b$ and $x$ have order $3$, $3$ and $2$ respectively, $[axf, b]=[axf,x]=1$, $xbx=b^{-1}$ and $H=\langle axf\rangle\times [\langle b\rangle]\langle x\rangle\cong C_3\times S_3$.
From here it is easy to see that $KH=HE=G$ and $H\cap E=1$. Hence, by \cite[Proposition~3.3]{BallesterEstebanPerezAPerezC26-commmathstat-categoriesskewleftbraces} we can define a skew brace structure on $B$, where,
\[(B,\cdot)\cong_\sigma H\cong C_3\times S_3.\]
and $\sigma:H\rightarrow K$, given by $\sigma(ke)=k$, for $k\in K$ and $e\in E$, is a bijective derivation with respect the projection of $H$ on $\Aut(B)$.

For the sake of completeness we include in Table~\ref{table-counter-3imp1} all values of the bijective derivation $\sigma$.

\begin{table}[ht]
    \centering
    {\small \begin{tabular}{cc|cc|cc}
    \hline
        $x$ & $\sigma^{-1}(x)$ & $x$ & $\sigma^{-1}(x)$ &  $x$ & $\sigma^{-1}(x)$ \\\hline
        $0$  & $1$ & $a$ & $af$ & $2a$  & $a^2f^2$ \\
        $b$  & $b$ & $a+b$ & $abf$ & $2a+b$  & $a^2bf^2$ \\        
        $2b$  & $b^2$ & $a+2b$ & $b^2af$ & $2a+2b$  & $a^2b^2f^2$ \\        
        $x$  & $x$ & $a+x$ & $axf$ & $2a+x$  & $a^2xf^2$ \\        
        $b+x$  & $bx$ & $a+b+x$ & $abxf$ & $2a+b+x$  & $a^2bxf^2$ \\        
        $2b+x$  & $b^2x$ & $a+2b+x$ & $ab^2xf$ & $2a+2b+x$  & $a^2b^2xf^2$ \\
        \hline
    \end{tabular}}
    \caption{Bijective derivation of the skew brace $(B,+,\cdot)$.}
    \label{table-counter-3imp1}
\end{table}

This skew brace corresponds to \texttt{S(18, 14)} of the \texttt{YangBaxter} library \cite{VendraminKonovalov25-YangBaxter-0.10.7} of \texttt{GAP} \cite{GAP4-15-1}.

Notice that $\Ker\lambda=\langle b, x\rangle_+$ is a maximal subbrace since its index is prime. And it contain every $2$-subgroup of $(B,+)$. Let us see that $\Ker\lambda$ is the only subbrace of order $6$. The subgroups of $(B,+)$ with order $6$ are
\[\langle a,x\rangle_+,\quad \langle a,b+x\rangle_+,\quad \langle a,2b+x\rangle_+,\text{ and } \Ker\lambda=\langle b,x\rangle_+.\]
Since 
\[a\cdot x=a+b+x,\quad a\cdot (b+x)=a+2b+x,\text{ and } a\cdot (2b+x)=a+x\]
the first three subgroups are not closed under the product, hence, $\Ker\lambda$ is the only subbrace of $B$ with order $6$.

Consider $C$ a $2$-Sylow subbrace of $B$. The only subbraces that contain $C$ are $B$, $\Ker\lambda$ and itself. Since $\Ker\lambda$ is trivial and its additive group is $S_3$, we have that $C$ is not an ideal of $\Ker\lambda$, therefore, $C$ is self-idealising, thus, $C$ is a Carter subbrace of $B$.

Let $I=\langle b\rangle_+$. Notice that $I\subseteq \Ker\lambda\cap \Fix(B)$ and is a normal subgroup of $(B,+)$, therefore, it is an ideal of $B$. Furthermore, $f=\id_K$ modulo $I$ which means that $B/I$ is trivial, in particular, since its order is $9$, is centrally nilpotent.

But $C+I/I\subseteq\Ker\lambda/I$. This means that $C+I/I$ is not a maximal centrally nilpotent subbrace of $B/I$, thus, by Proposition~\ref{prop-carter-max-cn}, it is not a Carter subbrace.
\end{ex}

\section{Carter subbraces in soluble skew braces}\label{sec-Carter-teo}
Carter subbraces could not exists, even for nilpotent type soluble skew braces.
\begin{ex}\label{ex-non-exis-carter-subbr}
Consider $(B,+)=\Z_2\oplus\Z_4$ and define
\begin{equation*}
\begin{bmatrix}
x_1 \\ y_1+2z_1
\end{bmatrix}\cdot \begin{bmatrix}
x_2 \\ y_2+2z_2
\end{bmatrix} = \begin{bmatrix}
x_1+x_2+(x_1+y_1+z_1+x_1y_1)y_2 \\ y_1+2z_1+2y_1x_2+2(x_1+y_1z_1)y_2+y_2+2z_2 \end{bmatrix}. 
\end{equation*}
This gives us a skew brace structure (see \cite[Theorem~3.1]{Bachiller15}), which corresponds to \texttt{S(8, 18)} of the \texttt{YangBaxter} library \cite{VendraminKonovalov25-YangBaxter-0.10.7} of \texttt{GAP} \cite{GAP4-15-1}. 

$B$ is not centrally nilpotent, since $Z(B)=1$ (see \cite[Theorem~3.1]{Bachiller15}). By simple calculations we can see that $A=\langle (1,0),(0,2)\rangle_+$ is the unique maximal subbrace of $B$. It has order $4$, hence, it is centrally nilpotent. Therefore, $A$ is the only candidate for Carter subbrace. But $A$ is an ideal, since $(A,+)$ is characteristic in $(B,+)$ and $(A,\cdot)$ is normal in $(B,\cdot)$. This means that $A$ is not self-idealising. Therefore, $B$ does not have Carter subbraces.

Notice that $(A,+)\cong (A,\cdot )\cong C_2\times C_2$. By \cite[Proposition~2.4]{Bachiller15}, $A$ is an abelian ideal of $B$ and $B/A$ is also abelian. Therefore, $B$ is soluble.
\end{ex}

Therefore, we need to add additional hypotheses. We say that $B$ is $\SLI$ if for every prime $p$ dividing $|B|$ there
exists a Sylow $p$-subbrace of $B$ that is also a left ideal. This class is not closed by subbraces so we need a stronger condition. If every subbrace of $B$ is $\SLI$, we say that $B$ is $\SLI_{subbr}$.

\begin{prop}\label{prop-binilp-satur}
Let $B$ be a finite $\SLI$ skew brace and $I$ an ideal of $B$ contained in $\Phi(B)$ such that $B/I$ is binilpotent. Then $B$ is binilpotent.
\end{prop}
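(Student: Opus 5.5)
We argue by a Frattini-type argument applied to the Sylow structure. Binilpotent means both $(B,+)$ and $(B,\cdot)$ are nilpotent. Since $B/I$ is binilpotent, each of $(B/I,+)$ and $(B/I,\cdot)$ is the direct product of its Sylow subgroups. For each prime $p$ dividing $|B|$, use the $\SLI$ hypothesis to fix a Sylow $p$-subbrace $P$ of $B$ that is a left ideal. The aim is to show that $P$ is an ideal of $B$ for every $p$. Then $(P,+)$ is normal in $(B,+)$ and $(P,\cdot)$ is normal in $(B,\cdot)$ for all $p$. Hence both groups have all Sylow subgroups normal, so both are nilpotent, and $B$ is binilpotent.

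To show that $P$ is an ideal, the natural object is its idealiser. Since $P$ is a left ideal, \cite[Theorem~A]{CampPerezPerez2026} gives that $N=\Ideal_B(P)=\Norm_{(B,+)}(P,+)\cap\Norm_{(B,\cdot)}(P,\cdot)$ is a subbrace of $B$.

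\emph{Step 1: $N+I=B$.} The image $(P+I)/I$ is a subbrace of $B/I$ of $p$-power order and index coprime to $p$. It is therefore a Sylow $p$-subgroup of both $(B/I,+)$ and $(B/I,\cdot)$, and it is normal in both because $B/I$ is binilpotent. So $(P+I)/I$ is an ideal of $B/I$, and $P+I$ is an ideal of $B$. Now run a Frattini argument in each group separately. $(P,+)$ is Sylow in the normal subgroup $(P+I,+)$, so $(B,+)=(P+I)+\Norm_{(B,+)}(P,+) = I+\Norm_{(B,+)}(P,+)$. Similarly, $(B,\cdot)=I\cdot\Norm_{(B,\cdot)}(P,\cdot)$.

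\emph{Step 2: combining the two normalisers.} Here the left-ideal hypothesis is essential. Because $P$ is $\lambda$-invariant, $P+I=PI$, and for $b\in B$ we have $b+P=bP$ as sets, since $bP=b+\lambda_b(P)=b+P$. This should imply that $b$ normalises $(P,+)$ if and only if it normalises $(P,\cdot)$, modulo a check that right cosets also match inside the ideal $P+I$. To get this, I would compare the two counts: the number of additive conjugates of $P$ inside $P+I$ and the number of multiplicative conjugates. I would try to show, as in the proof of \cite[Theorem~A]{CampPerezPerez2026}, that $\Norm_{(B,\cdot)}(P,\cdot)\supseteq \Norm_{(B,+)}(P,+)$ when $P$ is a left ideal, or at least that $N$ still supplements $I$. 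Given that, $B=\langle N, I\rangle$ as a brace, so $B=\langle N\cup I\rangle$ as a subbrace.

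\emph{Step 3: conclude.} Since $I\subseteq\Phi(B)$ and $\Phi(B)$ consists of non-generators, $B=\langle N\cup I\rangle$ forces $B=N$. Hence $P$ is an ideal of $B$, and the result follows as explained at the start.

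The main obstacle is Step 2, namely producing a single subbrace supplementing $I$ rather than two separate group-theoretic normalisers. If the coset identity $b+P=bP$ does not give the equality of normalisers directly, I would instead use \cite[Theorem~A]{CampPerezPerez2026}. That theorem makes $N$ a subbrace, and I would count $|B:N|$ against the number of Sylow $p$-subbraces lying in $P+I$ that are left ideals, transported by $I$ via conjugation in both groups.
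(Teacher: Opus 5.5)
Your Steps 1 and 3 are correct, but Step 2 is a genuine gap, and the route you suggest for it fails. For a left ideal $P$ the containment $\Norm_{(B,+)}(P,+)\subseteq\Norm_{(B,\cdot)}(P,\cdot)$ is false in general. Take $B=\Z_6$ with $a\cdot b=a+(-1)^a b$, a brace with $(B,\cdot)\cong S_3$. Then $P=\{0,3\}$ is a left ideal and is normal in $(B,+)$, but $(P,\cdot)$ is a non-normal Sylow $2$-subgroup of $S_3$. The equality of left cosets $b+P=bP$ says nothing about right cosets, and the counting alternative is not an argument. So as written you never produce a single subbrace that supplements $I$.

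The missing observation is that you do not need to combine the two normalisers. For a left ideal $P$, the group $\Norm_{(B,+)}(P,+)$ is already a subbrace; see \cite[Proposition~5.3]{CampPerezPerez2026}. It is also a one-line check. If $x,y$ normalise $(P,+)$, then so does $\lambda_x(y)$, because $\lambda_x(P)=P$. Hence so do $xy=x+\lambda_x(y)$ and $x^{-1}=-\lambda_{x^{-1}}(x)$.

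With this, your additive Frattini identity $B=\Norm_{(B,+)}(P,+)+I$ and your Step 3 give $\Norm_{(B,+)}(P,+)=B$, so $P$ is a strong left ideal. Only at this point does Theorem~A of \cite{CampPerezPerez2026} help: $\Ideal_B(P)=\Norm_{(B,\cdot)}(P,\cdot)$ is now a subbrace. Your multiplicative Frattini identity $B=I\,\Norm_{(B,\cdot)}(P,\cdot)$ and Step 3 then give $\Norm_{(B,\cdot)}(P,\cdot)=B$. This two-stage argument is exactly the paper's proof.

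A minor point: $P+I=PI$ holds because $I$ is an ideal, and it holds for any subbrace $P$. The $\lambda$-invariance of $P$ is not the reason.
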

\begin{proof}
Let $P$ be a left ideal Sylow subbrace of $B$. We have that 
\[(P+I)/I\] 
is a Sylow subbrace of $B/I$ so, by hypothesis, $(P+I,+)$ is normal in $(B,+)$. Applying Frattini argument we have that 
\[B=\Norm_{(B,+)}(P,+)+P+I=\Norm_{(B,+)}(P,+)+I.\]
Since $P$ is a left ideal of $B$, by \cite[Proposition~5.3]{CampPerezPerez2026}, $\Norm_{(B,+)}(P,+)$ is a subbrace of $B$. Hence, 
 \[B=\Norm_{(B,+)}(P,+)+I=\Norm_{(B,+)}(P,+).\]
Thus, $P$ is a strong left ideal of $B$. Since this happens for every prime that divides $|B|$ we have that $(B,+)$ is nilpotent.
 
By \cite[Theorem~A]{CampPerezPerez2026}, we have that $\Norm_{(B,\cdot)}(P,\cdot)$ is a subbrace of $B$ so we can repeat the argument on the multiplicative group of $B$ and we arrive to $(B,\cdot)$ being nilpotent.\qedhere
\end{proof}

Proposition~\ref{prop-binilp-satur} is false without the $\SLI$ hypothesis.
\begin{ex}
Following Example~\ref{ex-carter-not-inv-quo}, we have that the $2$-Sylow subbraces of $B$ are 
\[\langle x\rangle_+,\quad \langle b+x\rangle_+,\text{ and } \langle 2b+x\rangle_+.\]
They are not left ideals since $f(x)=b+x$. Therefore, $B$ is not in $\mathcal{SLI}$.

On the other hand, $\Ker\lambda$ is a maximal subbrace and the only subbrace of order $6$. $\Fix(B)=\langle a,b\rangle_+$ is also a maximal subbrace since its index is prime and contains every subbrace of order power of $3$. Therefore, $\Ker\lambda$ and $\Fix(B)$ are the only maximal subbraces of $B$. Hence, $\Phi(B)=\langle b\rangle_+=I$. As we said in Example~\ref{ex-carter-not-inv-quo}, $B/I$ is centrally nilpotent, in particular, binilpotent. But $(B,+)$ is not nilpotent. 
\end{ex}

\begin{lemma}\label{lemma-exist-binilp-self-ideal}
Let $B$ be a finite $\SLI_{subbr}$ skew brace and $I$ an ideal of $B$ such that $I$ and $B/I$ are binilpotent. Then there exists a binilpotent subbrace $C$ such that $B=C+I$. Moreover, if $C$ is maximal among the subbraces with this properties, then it is self-idealising. 
\end{lemma}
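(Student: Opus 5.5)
The first assertion follows from a minimal-counterexample induction on $|B|$; the self-idealising part then follows from Proposition~\ref{prop-binilp-satur}, applied inside the idealiser.

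\textbf{Existence of $C$.} Among the subbraces $S$ of $B$ with $S+I=B$, choose $S$ of minimal order. Then $S\cap I$ is an ideal of $S$, it is binilpotent as a subbrace of $I$, and $S/(S\cap I)\cong (S+I)/I=B/I$ is binilpotent. Moreover, $S$ is $\SLI_{subbr}$ because every subbrace of $S$ is a subbrace of $B$.

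We claim $S\cap I\subseteq \Phi(S)$. Suppose not. Then some maximal subbrace $M$ of $S$ satisfies $M+(S\cap I)=S$. This gives $M+I=S+I=B$ with $M$ strictly smaller than $S$, contradicting the minimality of $S$.

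So $S$ is an $\SLI$ skew brace, $S\cap I$ is an ideal of $S$ contained in $\Phi(S)$, and $S/(S\cap I)$ is binilpotent. Proposition~\ref{prop-binilp-satur} then shows that $S$ is binilpotent, and we take $C=S$. The one point to check is that $\Phi$ coincides with the non-generators, so that $M+(S\cap I)=S$ really does contradict maximality. This is the characterisation recalled in the preliminaries.

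\textbf{Self-idealising part.} Let $C$ be maximal among binilpotent subbraces with $C+I=B$. Suppose $C$ is an ideal of a subbrace $T\supsetneq C$; we may assume $T$ is minimal with $C$ an ideal of $T$ and $C\subsetneq T$. Note $T+I=B$.

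The aim is to produce a binilpotent subbrace strictly between $C$ and $T$, or equal to $T$. Since $C$ is an ideal of $T$, the quotient $T/C$ makes sense. My plan is to apply the first part to the pair $(T,\,C)$ rather than to $(T,\,T\cap I)$. The reason is that $C$ is binilpotent, and $T$ is again $\SLI_{subbr}$. What remains is to obtain a binilpotent quotient. By minimality of $T$, I would like $T/C$ to have no proper nontrivial subbraces, so that it has prime order and is therefore binilpotent. The route is to choose $T$ as a minimal subbrace of the idealiser properly containing $C$: any subbrace strictly between $C$ and $T$ is still idealised by $T$, and hence has $C$ as an ideal.

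The first part then gives a binilpotent subbrace $D\le T$ with $T=D+C$. This alone does not obviously enlarge $C$, and this is the main obstacle. The plan is instead to show that $T$ itself is binilpotent, which contradicts the maximality of $C$ because $T+I=B$.

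For this I would use the Sylow left ideals. Each Sylow left ideal $P$ of $T$ has the property that $(P+C)/C$ is a Sylow subbrace of $T/C$, which has prime order. So either $T=P+C$ for the prime $p=|T/C|$, or $P\subseteq C$. Normality of $(P,+)$ should follow from a Frattini argument as in Proposition~\ref{prop-binilp-satur}, using that $C$ is binilpotent and $C\trianglelefteq T$. Specifically, the Sylow subgroups of $C$ are characteristic in $C$ and hence normal in $T$ in both groups. For the prime $p$, the Sylow $p$-subbrace of $T$ is $P_C+Q$ with $Q$ of order $p$. The idealiser $\Norm(P)$ contains the $p'$-part of $C$, because that part commutes with $P_C$ and the question reduces to action on $P/P_C$. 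Verifying this commutation in both operations is the delicate step.
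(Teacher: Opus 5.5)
Your existence argument is correct. You take a supplement $S$ of $I$ of minimal order, note that $S\cap I\subseteq\Phi(S)$ (a maximal subbrace of $S$ missing $S\cap I$ would be a smaller supplement), and apply Proposition~\ref{prop-binilp-satur} to $S$. This is a non-recursive form of the paper's induction, which takes a maximal $M$ with $M+I=B$ and passes to $M$.

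The self-idealising part has a genuine gap. You never prove that $T$ is binilpotent, and the route you sketch cannot do it. It uses only three facts: $T$ is $\SLI_{subbr}$, $C$ is a binilpotent ideal of $T$, and $|T/C|$ is prime. These do not force binilpotency. Take the trivial skew brace on $S_3$; it is $\SLI_{subbr}$, since every subbrace of a trivial brace is a left ideal. The ideal $C=A_3$ is binilpotent of prime index, yet $S_3$ is not nilpotent. There $P_C=0$ and the $2'$-part of $C$ does not normalise a Sylow $2$-subgroup, so your ``delicate step'' is false as justified. Normality of $P_C$ says nothing about whether $P/P_C$ centralises $C/P_C$.

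The missing idea is to use $I$ itself. Since $C\subseteq T$ and $B=C+I$, Dedekind's law in $(B,+)$ gives $T=T\cap(C+I)=C+(T\cap I)$. Both $C$ and $T\cap I$ are binilpotent ideals of $T$. Moreover $C+(T\cap I)=C\cdot(T\cap I)$, because $T\cap I$ is a left ideal of $T$ and $c+j=c\,\lambda_c^{-1}(j)$. Fitting's theorem in $(T,+)$ and in $(T,\cdot)$ then shows $T$ is binilpotent. Since $T+I=B$ and $C\subsetneq T$, this contradicts the maximality of $C$. This is the paper's argument, and it needs none of the minimality of $T$, the prime order of $T/C$, or the Sylow analysis.
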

\begin{proof}
Suppose that $I\subseteq\Phi(B)$, then, by Proposition~\ref{prop-binilp-satur}, $B$ is binilpotent. So we can suppose that there exists a maximal subbrace $M$ such that $B=M+I$. We have that $I\cap M$ is a binilpotent ideal of $M$ and $M/(I\cap M)\cong B/I$ is binilpotent. By induction we have that there exists a binilpotent subbrace $C$ such that $M=C+(I\cap M)$. Therefore, $B=M+I=C+(I\cap M)+I=C+I$.

Now, suppose that $C$ is maximal among the binilpotent subbraces that supplement $I$ and consider $A$ a subbrace that idealises $C$. By \cite[Proposition~4.12]{CampPerezPerez2026} we can suppose that $C\subseteq A$. Since $B=A+I$ if we prove that $A$ is binilpotent, by the maximality of $C$ we will have that $C$ is self-idealising. Notice that
\[A=A\cap B=A\cap (C+I)=C+(A\cap I)\]
where the last equality follows from Dedekind identity. Since $C$ and $A\cap I$ are binilpotent ideals of $A$ we have that $A$ is binilpotent.\qedhere
\end{proof}

\begin{lemma}\label{lemma-self-idealising-preim}
Let $B$ be a skew brace, $I$ an ideal of $B$ and $D/I$ a self-idealising subbrace of $B/I$. Then $D$ is self-idealising in $B$.
\end{lemma}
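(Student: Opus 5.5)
The plan is to argue by contradiction and push any idealising subbrace down to the quotient $B/I$. Suppose $A$ is a subbrace of $B$ with $D\subsetneq A$ such that $D$ is an ideal of $A$, i.e.\ $[A,D]\subseteq D$. Two preliminary facts are needed. First, $I\subseteq D$: this is implicit in writing $D/I$, since the subbraces of $B/I$ are exactly the images $D/I$ of subbraces $D$ of $B$ containing $I$. Second, we may assume $I\subseteq A$ as well, because $A\supseteq D\supseteq I$.

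With these in place, consider the image $A/I$ in $B/I$. It is a subbrace, and $D/I\subsetneq A/I$ because $I\subseteq D\subsetneq A$ and the correspondence theorem for subbraces containing the ideal $I$ is injective. The core step is to show that $D/I$ is an ideal of $A/I$. I would check the three defining conditions directly through the canonical projection $\pi\colon B\to B/I$, which is a homomorphism of skew braces.

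\begin{itemize}
\item \emph{$\lambda$-invariance.} For $a\in A$ and $d\in D$ we have $\lambda_{\pi(a)}(\pi(d))=\pi(\lambda_a(d))\in\pi(D)$, because $D$ is a left ideal of $A$.
\item \emph{Additive normality.} $\pi(a)+\pi(d)-\pi(a)=\pi(a+d-a)\in\pi(D)$.
\item \emph{Multiplicative normality.} $\pi(a)\pi(d)\pi(a)^{-1}=\pi(ada^{-1})\in\pi(D)$.
\end{itemize}

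Equivalently, $\pi([A,D])=[\pi(A),\pi(D)]$, since $\pi$ maps additive commutators, multiplicative commutators and $\ast$-products to the corresponding elements, and maps generated subbraces to generated subbraces. Hence $[A/I,D/I]\subseteq D/I$.

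It follows that $D/I$ is an ideal of the strictly larger subbrace $A/I$, contradicting the assumption that $D/I$ is self-idealising in $B/I$. Therefore $D$ is self-idealising in $B$.

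The only point needing care is the identification of ``$D$ is an ideal of $A$'' with $[A,D]\subseteq D$, together with its compatibility with quotients. I expect this to be the main, though mild, obstacle, and I would handle it by working with the elementwise definition of ideal (left ideal, normal in $(A,+)$, normal in $(A,\cdot)$), as in the first step, rather than with the subbrace commutator. Nothing about the existence of idealisers in general is needed, since we only ever test a specific subbrace $A$.
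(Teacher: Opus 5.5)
Your proof is correct and uses the same idea as the paper's: project to $B/I$ and use that the canonical projection carries the relation $[A,D]\subseteq D$ to $[\pi(A),\pi(D)]\subseteq\pi(D)$. The only difference is formulation. You use the introduction's definition (no strictly larger subbrace has $D$ as an ideal), so $I\subseteq D\subseteq A$ is automatic and an elementwise check suffices. The paper instead lets $A$ be any subbrace with $[A,D]\subseteq D$, passes to $(A+I)/I$ via \cite[Proposition~4.3]{CampPerezPerez2026}, and so gets directly the form ``every idealising subbrace lies in $D$'' used in Proposition~\ref{prop-exist-binilp-self-ideal}; from your version that form follows via \cite[Proposition~4.12]{CampPerezPerez2026}, as in Lemma~\ref{lemma-exist-binilp-self-ideal}.
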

\begin{proof}
Let $A$ be a subbrace of $B$ that idealises $D$. Then, by \cite[Proposition~4.3]{CampPerezPerez2026},
\[[(A+I)/I, D/I]=([A,D]+I)/I\subseteq D/I.\]
Since $D/I$ is self-idealising, we have that $(A+I)/I\subseteq D$. Therefore, $A\subseteq D$ and the result follows.\qedhere
\end{proof}

\begin{prop}\label{prop-exist-binilp-self-ideal}
Let $B$ be a finite soluble $\SLI_{subbr}$ skew brace. Then there exists a binilpotent self-idealising subbrace.
\end{prop}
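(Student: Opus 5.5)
We argue by induction on $|B|$. If $B$ is binilpotent, $B$ itself is a binilpotent self-idealising subbrace, so we may assume it is not. Since $B$ is soluble and nontrivial, it has a nonzero abelian ideal. We fix a minimal ideal $N$ of $B$ contained in the last nonzero term of a series of ideals with abelian factors, so that $N$ is abelian and in particular binilpotent. The quotient $B/N$ is again finite and soluble. We also need it to be $\SLI_{subbr}$: a subbrace $D/N$ of $B/N$ comes from the subbrace $D$ of $B$, which has a Sylow $p$-subbrace $P$ that is a left ideal of $D$. Then $(P+N)/N$ is a Sylow $p$-subbrace of $D/N$ and is $\lambda$-invariant, because $\lambda_d(P+N)=\lambda_d(P)+\lambda_d(N)=P+N$. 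So the induction hypothesis gives a binilpotent self-idealising subbrace $D/N$ of $B/N$.

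By Lemma~\ref{lemma-self-idealising-preim}, the preimage $D$ is self-idealising in $B$. Moreover $N$ is an ideal of $D$ with $N$ and $D/N$ binilpotent, and $D$ is $\SLI_{subbr}$ because that class is closed under subbraces by definition. Lemma~\ref{lemma-exist-binilp-self-ideal} applied to $D$ and $N$ therefore gives a binilpotent subbrace $C$ of $D$, chosen maximal among binilpotent supplements of $N$ in $D$, with $D=C+N$ and $C$ self-idealising in $D$.

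It remains to show that $C$ is self-idealising in $B$. Let $A$ be a subbrace of $B$ that idealises $C$. By \cite[Proposition~4.12]{CampPerezPerez2026} we may assume $C\subseteq A$, so $C$ is an ideal of $A$. Then $A+N$ idealises $C+N=D$: modulo $N$, the subbrace commutator $[(A+N)/N,(C+N)/N]$ equals $([A,C]+N)/N\subseteq (C+N)/N$ by \cite[Proposition~4.3]{CampPerezPerez2026}, and taking preimages gives $[A+N,D]\subseteq D$. Because $D$ is self-idealising, $A\subseteq A+N\subseteq D$. Thus $A$ is a subbrace of $D$ idealising $C$, and self-idealisation of $C$ in $D$ forces $A=C$.

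The main obstacle is the step "$[A+N,D]\subseteq D$ from its image modulo $N$". This requires $A+N$ to be a subbrace. Since $N$ is an ideal, $A+N=AN$ is a subbrace, and the commutator compatibility with quotients is exactly what \cite[Proposition~4.3]{CampPerezPerez2026} provides, as in the proof of Lemma~\ref{lemma-self-idealising-preim}. The second delicate point is the inheritance of $\SLI_{subbr}$ by quotients; this was handled above through the image of a left-ideal Sylow subbrace of the preimage.
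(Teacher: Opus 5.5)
Your proposal is correct and follows essentially the same route as the paper's proof: induction on an abelian minimal ideal $N$, lifting a binilpotent self-idealising $D/N$ to a self-idealising $D$, applying Lemma~\ref{lemma-exist-binilp-self-ideal} inside $D$, and then showing that any subbrace idealising $C$ lies in $D$ and hence in $C$. The differences are minor. You explicitly check that $\SLI_{subbr}$ passes to $B/N$, which the paper leaves implicit. You also obtain $A\subseteq D$ from the quotient compatibility in \cite[Proposition~4.3]{CampPerezPerez2026} instead of citing \cite[Proposition~4.11]{CampPerezPerez2026}.
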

\begin{proof}
Consider $I$ a minimal ideal of $B$ which is abelian, by \cite{BallesterEstebanJimenezPerezC24-solubleskewbraces}, in particular, it is binilpotent. By induction $B/I$ has a binilpotent self-idealising subbrace $D/I$. Since $I$ and $D/I$ are binilpotent, by Lemma~\ref{lemma-exist-binilp-self-ideal}, there exists a binilpotent subbrace $C$ such that $D=C+I$ and is self-idealising in $D$. Let $A$ a subbrace of $B$ that idealises $C$. By \cite[Proposition~4.11]{CampPerezPerez2026}, $A$ idealises $C+I=D$. By Lemma~\ref{lemma-self-idealising-preim}, $D$ is self-idealising in $B$, therefore, $A\subseteq D$. Moreover, $A\subseteq C$ because $C$ is self-idealising in $D$. In conclusion, $C$ is self-idealising in $B$.\qedhere
\end{proof}

\begin{teo}\label{teo-Carter-thechnical}
Let $B$ be a finite soluble $\SLI_{subbr}$ skew brace such that every subbrace of prime-power order is centrally nilpotent. Then $B$ has Carter subbraces.
\end{teo}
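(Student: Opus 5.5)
By Proposition~\ref{prop-exist-binilp-self-ideal}, $B$ has a binilpotent self-idealising subbrace $C$. Since $C$ is self-idealising, it only remains to show that $C$ is centrally nilpotent. Then $C$ is a Carter subbrace.

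The plan is to decompose $C$ into its Sylow subbraces and use the hypothesis on prime-power subbraces. First, $C$ is a subbrace of $B$, and $B$ is $\SLI_{subbr}$, so $C$ is $\SLI$. For each prime $p$ dividing $|C|$, choose a Sylow $p$-subbrace $P$ of $C$ that is a left ideal of $C$. Because $(C,+)$ is nilpotent, $(P,+)$ is the unique Sylow $p$-subgroup of $(C,+)$, so it is characteristic there. Because $(C,\cdot)$ is nilpotent, $(P,\cdot)$ is likewise the unique Sylow $p$-subgroup of $(C,\cdot)$ and is normal. Since $P$ is also a left ideal, it is an ideal of $C$. The same holds for every prime, so $C$ is the direct sum (as a skew brace) of its Sylow subbraces $P_1,\dots,P_r$. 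Here we argue that distinct ideals of coprime orders intersect trivially, and that their commutators $[P_i,P_j]^C$ lie in $P_i\cap P_j=0$. The $\ast$-part holds because $P_i\ast P_j\subseteq P_j$ (as $P_j$ is a left ideal) and also $\subseteq P_i$ (as $P_i$ is an ideal); the additive and multiplicative commutators are handled similarly.

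Next, each $P_i$ is a subbrace of $B$ of prime-power order. By hypothesis, each $P_i$ is centrally nilpotent. A direct product of finitely many centrally nilpotent skew braces is centrally nilpotent, because the centre of a direct product is the product of the centres, so the upper central series is computed componentwise. Hence $C\cong P_1\times\dots\times P_r$ is centrally nilpotent.

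The main obstacle is the decomposition step: justifying that $C$ is the internal direct product of its Sylow subbraces as a skew brace, and not merely as groups. The key point is that the elementwise $\ast$ and the mixed commutator terms vanish. I would first check that elements of $P_i$ and $P_j$ commute both additively and multiplicatively, using normality and coprimality. Then I would check $\lambda_x(y)=y$ for $x\in P_i$, $y\in P_j$: indeed $x\ast y\in P_i\cap P_j=0$. This gives $\lambda_x(y)=-x+xy=-x+yx=-x+y+\lambda_y(x)$ and $\lambda_y(x)=x$ symmetrically. Once these identities hold, the centre of $C$ decomposes componentwise and the induction on the central series goes through.
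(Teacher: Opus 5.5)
Your proof is correct and follows the paper's route: take the binilpotent self-idealising subbrace $C$ from Proposition~\ref{prop-exist-binilp-self-ideal} and show that it is centrally nilpotent. The only difference is in that last step. The paper cites \cite[Theorem~4.12]{BallesterEstebanFerraraPerezCTrombetti25-pacificjm-cent-nilpv} (binilpotent with centrally nilpotent Sylow subbraces implies centrally nilpotent), whereas you prove it directly by writing $C$ as the direct product of its Sylow subbraces and computing the centre componentwise. Incidentally, the $\SLI$ hypothesis is not needed at that point: binilpotency alone makes the Sylow $p$-subgroup of $(C,+)$ characteristic, hence a left ideal, and then an ideal.
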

\begin{proof}
By Proposition~\ref{prop-exist-binilp-self-ideal}, there exists a binilpotent self-idealising subbrace $C$. Furthermore, by \cite[Theorem~4.12]{BallesterEstebanFerraraPerezCTrombetti25-pacificjm-cent-nilpv}, $C$ is centrally nilpotent. Therefore, $C$ is a Carter subbrace.\qedhere
\end{proof}

\begin{remark}
    Theorem~\ref{teo-Carter-thechnical} is false without the central nilpotency assumption on the subbraces of prime-power order, as we can see in Example~\ref{ex-non-exis-carter-subbr}.
\end{remark}

\begin{proof}[Proof of Theorem~\ref{teo-soluble-Carter}]
$B$ is a finite soluble skew brace where every subbrace of prime-power order is centrally nilpotent. Moreover, given a subbrace, it has Sylow $p$-subbraces, which, by hypothesis, are left ideals. Therefore, we can apply \ref{teo-Carter-thechnical}.\qedhere
\end{proof}

\begin{proof}[Proof of Theorem~\ref{teo-multiperm-Carter}]
A multipermutational skew brace is soluble and of nilpotent type, therefore, every Sylow subbrace is characteristic in $(B,+)$, in particular, lambda invariant. Therefore, $B$ is $\SLI_{subbr}$. Moreover, every subbrace of prime-power order is centrally nilpotent because they are multipermutational. Therefore, we can apply \ref{teo-Carter-thechnical}.\qedhere
\end{proof}

\section{Carter subbraces in multipermutational skew braces}\label{sec-Carter-teo-mul}

Suppose that $G$ is a finite soluble group and consider $H$ a subgroup of $G$ and $\Sigma$ a Sylow system of $G$. We say that $\Sigma$ \emph{reduces} in $H$ if $\{P\cap H\;|\;P\in\Sigma\}$ is a Sylow system of $H$ (see \cite[Kapitel VI, \S 15]{Huppert67}).

\begin{lemma}\label{lemma-sylsys-red-subbr}
Let $B$ be a finite skew brace of nilpotent type and $A$ a subbrace of $B$. Then $\Sigma_B$ reduces in $(A,\cdot)$.
\end{lemma}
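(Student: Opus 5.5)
The claim is that $\{P\cap A \mid P\in\Sigma_B\}$ is a Sylow system of $(A,\cdot)$. Since $(B,+)$ is nilpotent, $(A,+)$ is a subgroup of a nilpotent group and hence nilpotent, so $A$ is itself a skew brace of nilpotent type. By \cite[Proposition~7.5]{CampPerezPerez2026} applied to $A$, the set $\Sigma_A$ of Sylow subgroups of $(A,+)$ is a Sylow system of $(A,\cdot)$. So it suffices to show that $\Sigma_A=\{P\cap A \mid P\in\Sigma_B\}$.

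\textbf{Step 1: the intersections are the Sylow subgroups of $(A,+)$.} Fix a prime $p$ and let $P$ be the unique Sylow $p$-subgroup of $(B,+)$, which consists of all elements of $p$-power additive order. Then $P\cap A$ is exactly the set of elements of $A$ of $p$-power additive order. Because $(A,+)$ is nilpotent, this set is the unique Sylow $p$-subgroup $A_p$ of $(A,+)$. Hence $P\cap A=A_p$ for every $p$, and $\{P\cap A\}_{P\in\Sigma_B}=\Sigma_A$.

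\textbf{Step 2: interpret these as subgroups of $(A,\cdot)$.} By \cite[Proposition~7.5]{CampPerezPerez2026} the members of $\Sigma_B$ are subbraces, in particular subgroups of $(B,\cdot)$. Likewise the members of $\Sigma_A$ are subbraces of $A$. Therefore $P\cap A$ is the same set whether it is viewed as a subgroup of $(A,+)$ or of $(A,\cdot)$, and it has the same order in both. Since $|A|$ does not depend on the operation, $P\cap A$ is a Sylow $p$-subgroup of $(A,\cdot)$.

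\textbf{Step 3: conclude.} By \cite[Proposition~7.5]{CampPerezPerez2026} applied to $A$, these subgroups pairwise permute multiplicatively, one for each prime. This is exactly the statement that $\Sigma_B$ reduces in $(A,\cdot)$.

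The only point needing care is ensuring that \cite[Proposition~7.5]{CampPerezPerez2026} applies to the subbrace $A$ rather than only to $B$; this holds because nilpotent type passes to subbraces. If one preferred to avoid this, permutability can be checked directly. For distinct primes $p$ and $q$, the additive sum $A_p+A_q$ is a subgroup of $(A,+)$ of order $|A_p||A_q|$ and is a subbrace. Its multiplicative group therefore contains $A_pA_q$, and a cardinality count gives $A_p A_q = A_p+A_q$, which is a subgroup. I expect no serious obstacle.
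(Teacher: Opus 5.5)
Your proof is correct and is essentially the paper's argument: $\{P\cap A \mid P\in\Sigma_B\}$ is the Sylow system of the nilpotent group $(A,+)$, and \cite[Proposition~7.5]{CampPerezPerez2026} applied to the nilpotent-type subbrace $A$ makes it a Sylow system of $(A,\cdot)$. One step is left unjustified in your optional direct check: $A_p+A_q$ is a subbrace because it is a characteristic, hence $\lambda$-invariant, subgroup of $(A,+)$, and so a left ideal of $A$.
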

\begin{proof}
Since $(B,+)$ is nilpotent, $\Sigma'=\{P\cap A\;|\;P\in\Sigma_B\}$ is the unique a Sylow system of $(A,+)$, which means that $\Sigma_A=\Sigma'$. By \cite[Proposition~7.5]{CampPerezPerez2026}, $\Sigma'$ is a Sylow system of $(A,\cdot)$, therefore, $\Sigma_B$ reduces in $(A,\cdot)$.\qedhere
\end{proof}

\begin{prop}\label{prop-nilp-type-Carter-mult-subbr}
Let $B$ be a finite skew brace of nilpotent type. Then there exists at most one Carter subgroup of $(B,\cdot)$ that is a subbrace, namely the unique Carter subgroup of $(B,\cdot)$ in which $\Sigma_B$ reduces.
\end{prop}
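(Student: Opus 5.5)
The argument combines Lemma~\ref{lemma-sylsys-red-subbr} with the classical theory of Carter subgroups and Sylow systems in finite soluble groups (Huppert, Kapitel VI). First, the statement implicitly assumes $(B,\cdot)$ is soluble; otherwise Carter subgroups need not be unique up to the relevant property. So we either note that the claim is vacuous when $(B,\cdot)$ has no Carter subgroups, or work with soluble $(B,\cdot)$, which is the case of interest (multipermutational $B$). The classical facts needed are these. For a Sylow system $\Sigma$ of a finite soluble group $G$, there is exactly one Carter subgroup of $G$ into which $\Sigma$ reduces. This is Huppert VI.12/15: the Carter subgroups are the nilpotent projectors, and the projector associated with $\Sigma$ is unique. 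Moreover, every subgroup into which $\Sigma$ reduces contains the system normaliser $\Norm_G(\Sigma)$.

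Now let $C$ be a Carter subgroup of $(B,\cdot)$ that is also a subbrace. By Lemma~\ref{lemma-sylsys-red-subbr}, applied with $A=C$, the system $\Sigma_B$ reduces in $(C,\cdot)$. Hence $C$ is a Carter subgroup of $(B,\cdot)$ into which the fixed Sylow system $\Sigma_B$ reduces. This forces $C$ to be the unique such Carter subgroup determined by $\Sigma_B$. Consequently any two Carter subgroups of $(B,\cdot)$ that are subbraces coincide, so there is at most one, and it is the named one.

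The step I expect to be the main obstacle is justifying the uniqueness claim: a Carter subgroup into which a given Sylow system reduces is unique. If this is not directly quotable, I would argue as follows. Let $C_1$ and $C_2$ be Carter subgroups into which $\Sigma$ reduces. By Carter's theorem they are conjugate, say $C_2=C_1^g$. Then both $\Sigma$ and $\Sigma^g$ reduce into $C_2$. Since any two Sylow systems of $C_2$ are conjugate in $C_2$, we may adjust $g$ by an element of $C_2$ so that $\Sigma^g$ and $\Sigma$ have the same intersections with $C_2$. Then Huppert's theorem on system normalisers implies $g\in \Norm_G(\Sigma)C_2$. Finally $\Norm_G(\Sigma)\le C_1$, because Carter subgroups contain the system normaliser of any Sylow system reducing into them. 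Hence $C_2=C_1^g=C_1$. As a fallback, I would simply cite Huppert VI.12.5 together with VI.15 for this uniqueness.
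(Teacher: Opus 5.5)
Your argument is the paper's. Lemma~\ref{lemma-sylsys-red-subbr} makes $\Sigma_B$ reduce into every Carter subgroup of $(B,\cdot)$ that is a subbrace, and the classical fact that a Sylow system reduces into exactly one Carter subgroup (the paper cites Alperin's Theorem~6) gives uniqueness; your solubility caveat can be dropped, since $\Sigma_B$ is a Sylow system of $(B,\cdot)$, which is therefore soluble by P.~Hall's theorem. Rely on that citation rather than your fallback, which is circular: $\Sigma$ also reduces into $C_2$, so $\Norm_G(\Sigma)C_2=C_2$ and the step $g\in\Norm_G(\Sigma)C_2$ merely restates $C_1=C_2$ (the adjustment by $C_2$ is vacuous too, since a nilpotent group has only one Sylow system, and your general claim that every subgroup into which $\Sigma$ reduces contains $\Norm_G(\Sigma)$ is false, e.g.\ for the trivial subgroup); a genuine self-contained proof instead inducts on $|G|$ through a minimal normal subgroup $N$, mirroring the proof of Proposition~\ref{prop-sep-Carter-subbr}, and in the critical case $G=C_1N=C_2N$ obtains $C_i\le\Norm_G(\Sigma)\le C_i$ for $i=1,2$.
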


\begin{proof}
By Lemma~\ref{lemma-sylsys-red-subbr}, the Sylow system of $B$ reduces in every Carter subgroup of $(B,\cdot)$ that is a subbrace, but, by {\cite[Theorem~6]{ALPERIN1964355}}, $\Sigma_B$ reduces exactly in only one Carter of $(B,\cdot)$. Hence, there is at most one Carter subgroup of $(B,\cdot)$ such that is a subbrace.\qedhere
\end{proof}

Let $B$ be a finite skew brace of nilpotent type. We will denote by $(\Cent_{\Sigma_B},\cdot)$ to the unique Carter subgroup of $(B,\cdot)$ in which $\Sigma_B$ reduces.

\begin{prop}\label{prop-sys-id-contained}
    Let $B$ be a finite skew brace of nilpotent type. Then $\Ideal_B(\Sigma_B)\subseteq \Cent_{\Sigma_B}$.
\end{prop}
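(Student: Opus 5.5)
The plan is to reduce everything to classical group theory in $(B,\cdot)$. Since $B$ has nilpotent type, the Sylow system $\Sigma_B$ is, by \cite[Proposition~7.5]{CampPerezPerez2026}, a Sylow system of the soluble group $G=(B,\cdot)$. Here solubility follows from nilpotent type, which gives solubility of the additive group and hence of the multiplicative group, as in the setting of Proposition~\ref{prop-nilp-type-Carter-mult-subbr}. As computed in the preliminaries,
\[
\Ideal_B(\Sigma_B)=\Norm_{(B,\cdot)}(\Sigma_B,\cdot),
\]
which is the system normaliser of $\Sigma_B$ in $G$. So the statement becomes a purely group-theoretic claim: the system normaliser $\Norm_G(\Sigma)$ of a Sylow system $\Sigma$ is contained in the unique Carter subgroup $\Cent_\Sigma$ of $G$ into which $\Sigma$ reduces.

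For this classical fact I would use the standard construction of Carter subgroups and system normalisers via Sylow systems, namely the theory of Hall, Carter and Alperin; see \cite[Kapitel VI]{Huppert67} and \cite{ALPERIN1964355}.

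\begin{enumerate}
\item Let $D=\Norm_G(\Sigma)$. It is classical that $D$ is nilpotent, and that $\Sigma$ reduces into $D$.
\item Every Carter subgroup contains a system normaliser. More precisely, if $\Sigma$ reduces into a Carter subgroup $E$, then $\Sigma\cap E$ is a Sylow system of the nilpotent group $E$, hence consists of the normal Sylow subgroups of $E$. Therefore $E$ normalises each $P\cap E$.
\item The point that needs care is to obtain $D\subseteq E$ rather than merely $E\subseteq$ something. I would argue by induction on $|G|$ through a minimal normal subgroup $N$, following Carter's original proof. In $G/N$ the image system $\Sigma N/N$ reduces into $EN/N$, which is the Carter subgroup of $G/N$. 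By induction, $DN/N\le\Norm_{G/N}(\Sigma N/N)\le EN/N$, so $D\le EN$.
\item Inside $EN$, the subgroup $E$ is a complement to $N$, and it is the Carter subgroup of $EN$ into which $\Sigma\cap EN$ reduces. The subgroup $D$ normalises $\Sigma\cap EN$. By the conjugacy of complements and of Carter subgroups (Carter's theorem together with Alperin's uniqueness), the unique Carter subgroup of $EN$ into which $\Sigma$ reduces is $D$-invariant. Since it is unique, $D$ normalises $E$, and self-normality of Carter subgroups then gives $D\le E$.
\end{enumerate}

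The main obstacle I expect is step 3, specifically the uniqueness claim used at its end. Alperin's theorem \cite[Theorem~6]{ALPERIN1964355} already gives that exactly one Carter subgroup is reduced by $\Sigma$. Since $D$ permutes the members of $\Sigma$ (indeed fixes each), conjugation by $d\in D$ sends $\Cent_\Sigma$ to a Carter subgroup reduced by $\Sigma^d=\Sigma$. By uniqueness, $d$ normalises $\Cent_\Sigma$, and since Carter subgroups are self-normalising, $d\in\Cent_\Sigma$.

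This last argument makes the whole induction unnecessary. I would therefore present it as the proof and cite Alperin's uniqueness as the only essential input.
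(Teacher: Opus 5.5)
Your final argument is correct, and it starts from the same reduction as the paper: $\Ideal_B(\Sigma_B)=\Norm_{(B,\cdot)}(\Sigma_B,\cdot)$ is the system normaliser of $\Sigma_B$ in the soluble group $(B,\cdot)$. The difference lies in how the group-theoretic containment is obtained.

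The paper quotes it directly from \cite[Kapitel VI, Satz~15.6]{Huppert67}. You instead derive it from Alperin's uniqueness \cite[Theorem~6]{ALPERIN1964355} and the self-normality of Carter subgroups. If $d$ normalises every $P\in\Sigma_B$, then $\{P\cap \Cent_{\Sigma_B}^{\,d}\mid P\in\Sigma_B\}=\{(P\cap \Cent_{\Sigma_B})^d\mid P\in\Sigma_B\}$ is a Sylow system of the Carter subgroup $\Cent_{\Sigma_B}^{\,d}$. Uniqueness then gives $\Cent_{\Sigma_B}^{\,d}=\Cent_{\Sigma_B}$, and hence $d\in\Norm_{(B,\cdot)}(\Cent_{\Sigma_B})=\Cent_{\Sigma_B}$.

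Your version uses nothing beyond the uniqueness the paper already relies on in Proposition~\ref{prop-nilp-type-Carter-mult-subbr} to define $\Cent_{\Sigma_B}$, at the cost of two extra lines; the paper's proof is a bare citation.

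Two minor remarks. You are right to discard the induction in steps 1--4: step 4 is inaccurate as written, since $E$ need not complement $N$ in $EN$, for instance when $N\le E$. Also, the solubility of $(B,\cdot)$ should not be justified by passing through solubility of $(B,+)$. That step is glossed over, since solubility of the additive group alone is not a standard reason for solubility of the multiplicative group. The clean reason is that $(B,\cdot)$ has the Sylow system $\Sigma_B$ \cite[Proposition~7.5]{CampPerezPerez2026}, which forces solubility by Hall's theorem.
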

\begin{proof}
    Since $\Sigma_B$ is a Sylow system of $(B,\cdot)$ that reduces in $\Cent_{\Sigma_B}$, the result follows from \cite[Kapitel VI, Satz~15.6]{Huppert67}.\qedhere
\end{proof}

\begin{prop}\label{prop-CN-subbr-Carter}
Let $B$ be a finite skew brace of nilpotent type such that every Sylow subbrace is centrally nilpotent. If $\Cent_{\Sigma_B}$ is a subbrace, then $\Cent_{\Sigma_B}$ is a Carter subbrace of $B$. 
\end{prop}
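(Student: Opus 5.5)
We need to show two things: that $C:=\Cent_{\Sigma_B}$ is centrally nilpotent, and that it is self-idealising in $B$.

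\emph{Central nilpotency.} First, $(C,\cdot)$ is a Carter subgroup of $(B,\cdot)$, hence nilpotent. Second, $(C,+)$ is a subgroup of the nilpotent group $(B,+)$, hence nilpotent. So $C$ is a binilpotent subbrace. By Lemma~\ref{lemma-sylsys-red-subbr}, the Sylow subbraces of $C$ are $P\cap C$ for $P\in\Sigma_B$, and these are normal in both $(C,+)$ and $(C,\cdot)$. Therefore $C$ is the direct product of its Sylow subbraces $P\cap C$, each an ideal of $C$. Each $P\cap C$ is a subbrace of prime-power order contained in the Sylow subbrace $P$ of $B$.

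The step I expect to need care is getting central nilpotency of $P\cap C$ from that of $P$. Central nilpotency passes to subbraces (take the intersected central series, or use \cite[Theorem~4.12]{BallesterEstebanFerraraPerezCTrombetti25-pacificjm-cent-nilpv} as in Theorem~\ref{teo-Carter-thechnical}). Then $C$ is binilpotent and its Sylow subbraces are centrally nilpotent, so the same cited theorem gives that $C$ is centrally nilpotent. If the citation needs every prime-power-order subbrace of $C$ to be centrally nilpotent, this still holds, since any such subbrace lies in some $P\cap C$.

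\emph{Self-idealising.} Let $A$ be a subbrace of $B$ with $[A,C]\subseteq C$. By \cite[Proposition~4.12]{CampPerezPerez2026}, as used in Lemma~\ref{lemma-exist-binilp-self-ideal}, we may assume $C\subseteq A$, so that $C$ is an ideal of $A$. In particular $(C,\cdot)$ is normal in $(A,\cdot)$, so $A\subseteq \Norm_{(B,\cdot)}(C,\cdot)$. Since $(C,\cdot)$ is a Carter subgroup of $(B,\cdot)$, it is self-normalising, and hence $A\subseteq C$. Thus $C$ is self-idealising.

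Combining the two parts, $C$ is a centrally nilpotent self-idealising subbrace, that is, a Carter subbrace of $B$.
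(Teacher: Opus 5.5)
Your proof is correct and follows essentially the same route as the paper. Central nilpotency comes from binilpotency together with centrally nilpotent Sylow subbraces via \cite[Theorem~4.12]{BallesterEstebanFerraraPerezCTrombetti25-pacificjm-cent-nilpv}, and self-idealisation comes from the Carter subgroup $(C,\cdot)$ being self-normalising. You also make explicit the step the paper leaves implicit, that the Sylow subbraces $P\cap C$ inherit central nilpotency from $P$; your reduction to $C\subseteq A$ is harmless but unnecessary, since $[A,C]_\bullet\subseteq C$ already gives $A\subseteq\Norm_{(B,\cdot)}(C,\cdot)$.
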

\begin{proof}
Since $\Cent_{\Sigma_B}$ is  binilpotent and every Sylow subbrace of $\Cent_{\Sigma_B}$ is centrally nilpotent, by \cite[Theorem~4.12]{BallesterEstebanFerraraPerezCTrombetti25-pacificjm-cent-nilpv}, $\Cent_{\Sigma_B}$ is centrally nilpotent. Now, let $A$ be a subbrace such that $[A,\Cent_{\Sigma_B}]\subseteq \Cent_{\Sigma_B}$, then $[A,\Cent_{\Sigma_B}]_\bullet\subseteq \Cent_{\Sigma_B}$. Hence, $(A,\cdot)\leq \Norm_{(B,\cdot)}(\Cent_{\Sigma_B},\cdot)=(\Cent_{\Sigma_B},\cdot)$ and $\Cent_{\Sigma_B}$ is self-idealising.\qedhere
\end{proof}

Without the hypothesis on the Sylow subbraces Proposition~\ref{prop-CN-subbr-Carter} is false. Consider $B$ as in Example~\ref{ex-non-exis-carter-subbr}. We have that $\Cent_{\Sigma_B}=B$ since $(B,\cdot)$ is nilpotent. Therefore, $\Cent_{\Sigma_B}$ is a subbrace but is not centrally nilpotent. 

\begin{lemma}\label{lemma-mult-carter-quo}
    Let $B$ be a finite skew brace of nilpotent type and $I$ an ideal of $B$. Then $\Cent_{\Sigma_B}I/I=\Cent_{\Sigma_{B/I}}$.
\end{lemma}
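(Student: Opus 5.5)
We reduce the statement to the classical fact that Carter subgroups behave well under epimorphisms, together with Alperin's characterisation (\cite[Theorem~6]{ALPERIN1964355}) of $\Cent_{\Sigma_B}$ as the unique Carter subgroup of $(B,\cdot)$ in which $\Sigma_B$ reduces. Note first that $(B,\cdot)$ is soluble: $(B,+)$ is nilpotent, so the Sylow system $\Sigma_B$ of $(B,+)$ is also a Sylow system of $(B,\cdot)$ by \cite[Proposition~7.5]{CampPerezPerez2026}, and hence $(B,\cdot)$ is soluble. The quotient $B/I$ is again of nilpotent type, so $\Cent_{\Sigma_{B/I}}$ makes sense. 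Throughout, $\Cent_{\Sigma_B}I/I$ is read as the image of $\Cent_{\Sigma_B}$ in the multiplicative group $(B/I,\cdot)=(B,\cdot)/(I,\cdot)$.

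\textbf{Step 1: the image is a Carter subgroup.} Since $(I,\cdot)$ is normal in the soluble group $(B,\cdot)$, the classical result that images of Carter subgroups under epimorphisms are Carter subgroups shows that $(\Cent_{\Sigma_B}I/I,\cdot)$ is a Carter subgroup of $(B/I,\cdot)$.

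\textbf{Step 2: identify the Sylow system of the quotient.} The Sylow $p$-subgroups of $(B/I,+)$ are exactly the images $(P+I)/I$ with $P\in\Sigma_B$, because $(B/I,+)$ is nilpotent and $P+I=PI$ as sets. Hence
\[\Sigma_{B/I}=\{PI/I\mid P\in\Sigma_B\},\]
which is precisely the image of $\Sigma_B$ in $(B/I,\cdot)$.

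\textbf{Step 3: the image system reduces in the image Carter subgroup.} This is the main point. We know $\Sigma_B$ reduces in $\Cent_{\Sigma_B}$, so $\{P\cap \Cent_{\Sigma_B}\}$ is a Sylow system of $(\Cent_{\Sigma_B},\cdot)$. Images of Sylow $p$-subgroups under an epimorphism are Sylow $p$-subgroups, and they still pairwise permute. Therefore the images $(P\cap \Cent_{\Sigma_B})I/I$ form a Sylow system of $\Cent_{\Sigma_B}I/I$. Each such image is contained in
\[(PI/I)\cap(\Cent_{\Sigma_B}I/I),\]
which is a $p$-subgroup of $\Cent_{\Sigma_B}I/I$. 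By Sylow maximality the two coincide. Hence $\Sigma_{B/I}$ reduces in $\Cent_{\Sigma_B}I/I$.

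\textbf{Conclusion.} By the uniqueness part of \cite[Theorem~6]{ALPERIN1964355}, applied in $(B/I,\cdot)$ to the Sylow system $\Sigma_{B/I}$, we obtain $\Cent_{\Sigma_B}I/I=\Cent_{\Sigma_{B/I}}$.

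The main obstacle is Step 3. It requires the standard fact that reducibility of a Sylow system is preserved under epimorphisms; this is also in \cite[Kapitel VI, \S15]{Huppert67}. One must also check that the Sylow system of the brace quotient is exactly the image of $\Sigma_B$, which is Step 2.
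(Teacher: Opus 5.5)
Your proposal is correct and follows the same route as the paper. Both arguments identify $\Sigma_{B/I}$ as the image of $\Sigma_B$, observe that $(\Cent_{\Sigma_B}I/I,\cdot)$ is a Carter subgroup of $(B/I,\cdot)$ in which $\Sigma_{B/I}$ reduces, and conclude by the uniqueness in Alperin's theorem. Your Step~3 supplies the Sylow-maximality check that reduction passes to epimorphic images, which the paper asserts without proof.
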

\begin{proof}
    If $\Sigma_B=\{P_1,\ldots,P_n\}$, then $\Sigma_{B/I}=\{P_1I/I,\ldots, P_nI/I\}$. We have that $(\Cent_{\Sigma_B}I/I,\cdot)$ is a Carter subgroup of $(B/I,\cdot)$. Since $\Sigma_B$ reduces in $\Cent_{\Sigma_B}$, we have that $\Sigma_{B/I}$ reduces in $\Cent_{\Sigma_B}I/I$. By the unicity of $\Cent_{\Sigma_{B/I}}$, the result follows.\qedhere
\end{proof}

\begin{lemma}\label{lemma-cont-sylow-ideliser}
    Let $B$ be a finite skew brace of nilpotent type, $(A,\cdot)$ a nilpotent subgroup of $(B,\cdot)$ in where $\Sigma_B$ reduces and $I$ an ideal of $B$ that is a $p$-brace for some prime $p$. If $B=AI$, then $A\subseteq \Ideal_B(\Sigma_B)$.
\end{lemma}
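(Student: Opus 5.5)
The plan is to verify directly that $A$ normalises, in $(B,\cdot)$, every member of $\Sigma_B$. Since
\[\Ideal_B(\Sigma_B)=\Norm_{(B,\cdot)}(\Sigma_B,\cdot),\]
this gives the inclusion $A\subseteq\Ideal_B(\Sigma_B)$. Write $\Sigma_B=\{P_q\}$, where $P_q$ is the Sylow $q$-subgroup of $(B,+)$. By \cite[Proposition~7.5]{CampPerezPerez2026}, each $P_q$ is also a Sylow $q$-subgroup of $(B,\cdot)$. Because $\Sigma_B$ reduces in $A$, each $A_q:=A\cap P_q$ is a Sylow $q$-subgroup of $(A,\cdot)$. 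Since $(A,\cdot)$ is nilpotent, $(A,\cdot)$ is the direct product of the $A_q$, and distinct $A_q$'s centralise each other.

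First I locate $I$. Since $I$ is an ideal, $(I,+)$ is a normal $p$-subgroup of the nilpotent group $(B,+)$, so $I\subseteq P_p$. Next I compare orders using $B=AI$:
\[|B|=\frac{|A|\,|I|}{|A\cap I|}.\]
As $I$ is a $p$-group, the $q$-part of $|B|$ equals the $q$-part of $|A|$ for every prime $q\neq p$. Hence $|A_q|=|P_q|$, and so $P_q=A_q\subseteq A$ for all $q\neq p$. For the prime $p$, Dedekind's identity in $(B,\cdot)$, together with $I\subseteq P_p$, gives
\[P_p=P_p\cap AI=(A\cap P_p)I=A_pI.\]

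Now normality follows. For $q\ne p$, the subgroup $P_q=A_q$ is a Sylow subgroup of the nilpotent group $(A,\cdot)$, hence normal in $(A,\cdot)$, so $A$ normalises $P_q$. For $P_p=A_pI$, I check each factor of $A=\prod_q A_q$ separately. The factor $A_p$ lies in $P_p$ and so normalises it. For $q\neq p$, the factor $A_q$ centralises $A_p$ and normalises $I$, because $(I,\cdot)$ is normal in $(B,\cdot)$. Thus $A_q$ normalises the product $A_pI=P_p$. Consequently $A$ normalises every $P\in\Sigma_B$, and $A\subseteq\Ideal_B(\Sigma_B)$.

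The step I expect to need the most care is the identification $P_p=A_pI$, which relies on two facts. One is that $I\subseteq P_p$, which uses that $I$ is normal in $(B,+)$. The other is that the multiplicative Dedekind identity applies, which needs $P_p$ to be a multiplicative subgroup; this is provided by \cite[Proposition~7.5]{CampPerezPerez2026}. The order computation that forces $P_q\subseteq A$ for $q\neq p$ is routine once the reduction hypothesis is written as "$A\cap P_q$ is Sylow in $A$".
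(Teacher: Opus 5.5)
Your proof is correct and takes essentially the same approach as the paper. For $q\neq p$, the order comparison from $B=AI$ forces $P_q=A\cap P_q$, which is normal in $(A,\cdot)$; for the prime $p$, you write $P_p=(A\cap P_p)I$ and use that $A$ normalises both factors. You also supply the justification of $P_p=(A\cap P_p)I$, via $I\subseteq P_p$ and Dedekind's law in $(B,\cdot)$, which the paper only asserts.
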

\begin{proof}
    Consider $q\neq p$ and let $Q$ be the Sylow $q$-subgroup in ${\Sigma}_B$. Since ${\Sigma}_B$ reduces in $(A,\cdot)$, $Q\cap A$ is a Sylow $q$-subgroup of $(A,\cdot)$. $B=A+I$ and $I$ is a $p$-brace, therefore, the Sylow $q$-subgroups of $(A,\cdot)$ and $(B,\cdot)$ have the same order. Thus, $Q=Q\cap A$ which is normal in $(A,\cdot)$. 
    
    On the other hand, if $P$ is the Sylow $p$-subgroup in ${\Sigma}_B$, then $P=P_1 I$, for some $P_1 \in \Syl_p(A)$. So $A\subseteq \Norm_{(B,\cdot)}(P)$ because $A\subseteq \Norm_{(B,\cdot)}(P_1)$ and $(I,\cdot)$ is normal in $(B,\cdot)$. Therefore, $A\subseteq \Norm_{(B,\cdot)}(\Sigma_B))=\Ideal_B(\Sigma_B)$.\qedhere
\end{proof}

\begin{defi}
Let $B$ a finite skew brace. We will say that $B$ is separable if every chief factor of $B$ is a $p$-brace, for some prime $p$.
\end{defi}

\begin{prop}\label{prop-sep-Carter-subbr}
    Let $B$ be a finite separable skew brace of nilpotent type. Then $\Cent_{\Sigma_B}$ is a subbrace of $B$.
\end{prop}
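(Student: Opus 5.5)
We argue by induction on $|B|$. The aim is to show that $\Cent_{\Sigma_B}$ is closed under the additive operation, since it is already a subgroup of $(B,\cdot)$. If $B=1$ there is nothing to prove. Otherwise, choose a minimal ideal $I$ of $B$. By separability, $I$ is a $p$-brace for some prime $p$. Quotients of separable skew braces are separable, because chief factors of $B/I$ are chief factors of $B$. The quotient is also of nilpotent type. So by induction $\Cent_{\Sigma_{B/I}}$ is a subbrace of $B/I$. By Lemma~\ref{lemma-mult-carter-quo} we have $\Cent_{\Sigma_{B/I}}=\Cent_{\Sigma_B}I/I$. Hence its preimage $D=\Cent_{\Sigma_B}I$ is a subbrace of $B$, and $D=\Cent_{\Sigma_B}+I$ as sets because $I$ is an ideal.

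Now consider two cases. If $D\neq B$, then $D$ is a proper subbrace of nilpotent type. It is separable, because it has a series of ideals (its intersection with a chief series of $B$, refined) whose factors are $p$-sections. I expect this point to need care, and if necessary I would instead run the induction on the class of braces all of whose subbraces have prime-power chief factors. By Lemma~\ref{lemma-sylsys-red-subbr}, $\Sigma_B$ reduces in $D$ and $\Sigma_D=\{P\cap D\}$. Since $\Sigma_B$ reduces in $\Cent_{\Sigma_B}\subseteq D$, the system $\Sigma_D$ reduces in $\Cent_{\Sigma_B}$. Moreover, $(\Cent_{\Sigma_B},\cdot)$ is a Carter subgroup of $(D,\cdot)$: it is nilpotent, and it is self-normalising in $D$ because it is self-normalising in $B$. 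By uniqueness we get $\Cent_{\Sigma_D}=\Cent_{\Sigma_B}$, and induction applied to $D$ shows that it is a subbrace.

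The main case is $D=B$, that is, $B=\Cent_{\Sigma_B}I$. Here we apply Lemma~\ref{lemma-cont-sylow-ideliser} with $A=\Cent_{\Sigma_B}$. It is nilpotent, $\Sigma_B$ reduces in it, and $I$ is a $p$-brace ideal, so $\Cent_{\Sigma_B}\subseteq\Ideal_B(\Sigma_B)$. Combined with Proposition~\ref{prop-sys-id-contained}, this gives $\Cent_{\Sigma_B}=\Ideal_B(\Sigma_B)$. The latter is a subbrace by \cite[Theorem~A]{CampPerezPerez2026}, as recalled in the preliminaries. This finishes the induction.

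The main obstacle is justifying the induction step when $D$ is proper, namely showing that subbraces of separable braces of nilpotent type are again separable. If this fails in general, I would avoid passing to $D$ and argue directly inside $B$. In that approach I would use the nilpotent type to express the $p$-part of $\Cent_{\Sigma_B}$ through the Sylow subbraces, which are characteristic in $(B,+)$ and therefore left ideals. Alternatively, I would use a minimal ideal $I$ chosen inside the relevant Sylow subbrace, and apply Lemma~\ref{lemma-cont-sylow-ideliser} within $D$ with the $p$-brace ideal $I$ of $D$. This gives $\Cent_{\Sigma_B}=\Ideal_D(\Sigma_D)$, which is a subbrace of $D$ and hence of $B$. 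That route avoids the separability of $D$ altogether, so I would try it first.
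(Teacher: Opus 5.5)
Your proposal is correct and follows the paper's proof essentially step for step. Both arguments take a minimal ideal $I$ that is a $p$-brace, use Lemma~\ref{lemma-mult-carter-quo} and induction to make $D=\Cent_{\Sigma_B}I$ a subbrace, apply induction again when $D\neq B$, and use Lemma~\ref{lemma-cont-sylow-ideliser} with Proposition~\ref{prop-sys-id-contained} when $D=B$.

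The separability of $D$ that worries you is also used silently in the paper, and it does hold. Take a chief factor $H/K$ of $D$ and a chief series $(I_j)$ of $B$; by minimality of $H/K$ there is a $j$ with $H=(H\cap I_j)+K$ and $H\cap I_{j-1}\subseteq K$, so $|H/K|$ divides $|I_j/I_{j-1}|$. Your fallback is also valid: applying Lemma~\ref{lemma-cont-sylow-ideliser} and Proposition~\ref{prop-sys-id-contained} inside $D$ gives $\Cent_{\Sigma_B}=\Ideal_D(\Sigma_D)$ directly, which removes the case split and the second use of induction.
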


\begin{proof}
We proceed by induction on the cardinality of $B$. Let $I$ be a minimal ideal of $B$, which is a $p$-brace for some prime $p$ by definition. BY Lemma~\ref{lemma-mult-carter-quo}, $\Cent_{\Sigma_B}I/I=\Cent_{\Sigma_{B/I}}$. By induction, $\Cent_{\Sigma_B}I/I$ is a subbrace of $B/I$. Therefore, $A=\Cent_{\Sigma_B}I$ is a subbrace. By Lemma~\ref{lemma-sylsys-red-subbr}, $\Sigma_B$ reduces in it, which implies that $\Cent_{\Sigma_B}=\Cent_{\Sigma_A}$. If $A$ is a proper subbrace of $B$, we may use again the induction hypothesis and conclude that $\Cent_{\Sigma_B}$ is a subbrace of $B$. So we can assume $B=\Cent_{\Sigma_B}I$. By Lemma~\ref{lemma-cont-sylow-ideliser} and Proposition~\ref{prop-sys-id-contained}
\[\Cent_{\Sigma_B}\subseteq \Ideal_B(\Sigma_B)\subseteq \Cent_{\Sigma_B}.\]
Which means that $\Cent_{\Sigma_B}$ coincide with $\Ideal_B({\Sigma}_B)$ which is a a subbrace.\qedhere
\end{proof}


 \begin{proof}[Proof of Theorem~\ref{teo-multiperm-Carter-multi}]
By \cite{pSolubility2026}, $B$ is separable of nilpotent type and every Sylow subbrace is centrally nilpotent. By Proposition~\ref{prop-nilp-type-Carter-mult-subbr} and Proposition~\ref{prop-sep-Carter-subbr} $\Cent_{\Sigma_B}$ is the unique Carter subgroup of $(B,\cdot)$ that is a subbrace. By Proposition~\ref{prop-CN-subbr-Carter} we have that $\Cent_{\Sigma_B}$ is a Carter subbrace of $B$. Finally $\Ideal_B(\Sigma_B)\subseteq \Cent_{\Sigma_B}$, by Proposition~\ref{prop-sys-id-contained}.\qedhere
 \end{proof}

\bibliographystyle{plain}
\bibliography{bibgroup}
\end{document}